\newtheorem{theorem}{Theorem}[section]
\newtheorem{lemma}[theorem]{Lemma}
\newtheorem{corollary}[theorem]{Corollary}
\theoremstyle{remark}
\newtheorem{remark}[theorem]{Remark}
\numberwithin{equation}{section}
\newcommand{\norm}[1]{\|#1\|}
\newcommand{\Tnorm}[1]{|\mkern-1mu|\mkern-1mu|#1|\mkern-1mu|\mkern-1mu|}
\newcommand{\Mnorm}[1]{|\mkern-1mu|\mkern-1mu|#1|\mkern-1mu|\mkern-1mu|_\mathrm{M}}
\newcommand{\Vnorm}[1]{|\mkern-1mu|\mkern-1mu|#1|\mkern-1mu|\mkern-1mu|_\mathrm{V}}
\newcommand{\sprod}[2]{\langle #1, #2 \rangle}
\def\d{\partial}
\def\B{\mathcal{B}}
\def\L{\mathcal{L}}
\def\Bt{\tilde{\mathcal{B}}}
\def\Lt{\tilde{\mathcal{L}}}
\def\R{{\mathbb R}}
\def\Kt{\tilde{K}}
\def\et{\tilde{e}}
\def\etat{\tilde{\eta}}
\def\xit{\tilde{\xi}}
\def\sumK{\sum_{K\in\mathcal{\tilde{C}}_h}\!\!}
\def\sumKV{\sum_{K\in\mathcal{C}_h}\!\!}
\def\curl{\nabla_x \times }
\def\gradx{\nabla_x}
\def\fii{\varphi}
\def\dbart{\bar{\partial}_t}
\def\dbartt{\bar{\partial}_t^2}
\def\EE{\mathcal{E}}
\def\bbE{\mathbb{E}}
\def\bv{\mathbf{v}}
\def\bx{\mathbf{x}}
\begin{document}
\title[hp-SD Schemes for the Vlasov-Maxwell System]
{On hp-Streamline Diffusion and Nitsche schemes for the Relativistic Vlasov-Maxwell System}

\author[M.~Asadzadeh]{M. Asadzadeh}

\address{ 
Department of Mathematics, 
Chalmers University of Technology and  G\"oteborg University, 
SE--412 96, G\"oteborg, Sweden
} 
\email{mohammad@chalmers.se}

% \author[S.~Gonzalez-Pintor]{S. Gonzalez-Pintor}

% \address{ 
% Department of Mathematics, 
% Chalmers University of Technology and  G\"oteborg University, 
% SE--412 96, G\"oteborg, Sweden
% } 
% \email{sebastian.gonzalez-Pintor@chalmers.se}

\author[P.~Kowalczyk]{P. Kowalczyk}

\address{
Institute of Applied Mathematics and Mechanics,
University of Warsaw,
Banacha 2, 02-097 Warszawa, Poland
}
\email{pkowal@mimuw.edu.pl} 

\author[C.~Standar]{C. Standar}

\address{
Department of Mathematics,
Chalmers University of Technology and  G\"oteborg University,
SE--412 96, G\"oteborg, Sweden
}
\email{standarc@chalmers.se}

\keywords{hp-method \and Streamline Diffusion \and Discontinuous Galerkin
\and Vlasov-Maxwell system \and Nitsche scheme}

\begin{abstract}  
We  study stability and convergence of 
 $hp$-streamline diffusion (SD) finite element, 
and Nitsche's schemes for the three dimensional,  
relativistic (3 spatial dimension and 3 velocities), time dependent 
 Vlasov-Maxwell system 
and Maxwell's equations, respectively.  
For the $hp$ scheme for the Vlasov-Maxwell system, 
assuming that the exact solution is in the Sobolev space 
$H^{s+1}(\Omega)$, we derive global {\sl a priori} error bound of 
order ${\mathcal O}(h/p)^{s+1/2}$, 
where $h (= \max_K h_K)$ is the mesh parameter and $p (= \max_K p_K)$ 
is the spectral order. 
This estimate is based on the local version with $h_K=\mbox{ diam } K$ 
being the diameter of the {\sl phase-space-time} element $K$ and 
$p_K$ is the spectral order (the degree of approximating finite element 
polynomial) for $K$. 
As for the 
Nitsche's scheme, by a simple calculus of the field equations,  
first we convert the Maxwell's system to an {\sl elliptic type} 
 equation. 
Then, combining the Nitsche's method for the spatial discretization 
with a second order time scheme, we obtain optimal convergence of 
 ${\mathcal O}(h^2+k^2)$, where $h$ is the spatial mesh size and 
$k$ is the time step. Here, as in the classical literature, the second 
order time scheme requires higher order regularity assumptions. 

Numerical justification of the results, in 
lower dimensions, is presented
and is also the subject of a forthcoming computational work 
\cite{Malmberg_Standar}.

\end{abstract}

\maketitle

\section{Introduction}

We study stability and convergence for some specific finite element 
schemes for a model problem for the three dimensional, 
relativistic, Vlasov-Maxwell (VM) system 
with 3-dimensional spatial domain ($\bx\in\Omega_x\subset{\mathbb R}^3$) 
and 3-dimensional velocities domain ($\bv \in\Omega_v\subset{\mathbb R}^3$).  
The objective is two-fold: 

i)  Numerical investigations of the $hp$ 
-version of the streamline diffusion (SD)
finite element method for VM where both Maxwell's 
and Vlasov equations are discretized using a 
space-velocity-time scheme both in $h$ (mesh size) and 
in $p$ (spatial order) versions. In this part we derive optimal 
a priori error bounds for a SD scheme in a
$L_2$-based norm.

ii) 
The study of the combined 
effect of Nitsche's symmetrization (cf \cite{Nitsche} and \cite{AssousMichaeli})
in the spatial scheme for a Galerkin method 
and a time discretization, for a second order pde obtained through 
the combined Maxwell's fields. 

The SD method was suggested by Hughes and Brooks in \cite{Hughes_Brooks:79} 
for the 
fluid problems. The method was further developed  
(by T. Houghs and co-workers) to include several engineering problems. 
A through mathematical analysis was first given by Johnson et al in 
\cite{John} in a study of the Navier-Stokes equations and was 
extended to most pdes with hyperbolic nature, where, e.g., 
\cite{Asad}-\cite{AsadSopas} and \cite{Szepessy} 
are relevant in the present study. 
In the SD method the test function is modified by adding       a multiple 
of the streaming part in the equation, in terms of the test function, to it. 
Then, in the weak formulation we obtain a multiple of streaming 
terms in test and trial functions. 
This can be viewed as an extra diffusion term in the streaming direction in 
the original equation. Hence, the name of the method 
(the streamline diffusion). Such an extra diffusion would improve both the 
stability and convergence properties of the underlying Galerkin scheme. 
It is well known that the standard Galerkin method has a weaker convergence 
property for the 
hyperbolic problems: ${\mathcal O}(h^{s-1})$ versus  
${\mathcal O}(h^{s})$ for the elliptic and parabolic problems with  
exact solution in the Sobolev space $H^s(\Omega)$. 
The SD method improves this weak convergence to ${\mathcal O}(h^{s-1/2})$ 
(see \cite{Adams} for Sobolev spaces of non-integer order) 
and also, having an upwinding character, enhances the stability. 

These two properties are achieved by discontinuous Galerkin as well (see, e.g. \cite{Bre}).
The $hp$-approach is to capture local behavior in the sense that: in 
the vicinity of singularities refined mesh $h$ is combined with the 
lower order (small $p$) polynomial approximations, whereas in more smooth 
regions higher order polynomials (large $p$) and non-refined (large $h$) meshes 
are used. In a sense the $hp$-approach may be interpreted as a kind of 
automatic adaptivity. 

The Vlasov-Maxwell (VM) system which describes the time evolution of 
collisionless 
plasma is formulated as 
%\begin{display}
\begin{equation}\label{VMI}
\begin{aligned}
\d_t f &+\hat v\cdot \nabla_x f+q(E+c^{-1} \hat{v}\times B)\cdot \nabla_v f=0,\\
&\d_t E=c \nabla_x \times B-j,\qquad \nabla_x \cdot E=\rho, \\
&\d_t B=-c\nabla_x \times E, \qquad  \nabla_x \cdot B=0
\end{aligned}
\end{equation}
with properly assigned initial data
$f(0,x,v)=f^0(x,v)\ge0$, $E(0,x)=E^0(x)$, $B(0,x)=B^0(x)$.
Here $f$ is the density, in phase space, time of 
particles with charge $q$,
mass $m$ and velocity 
$$
\hat v=(m^2+c^{-2}\vert v\vert^2)^{-1/2} v\qquad (v\,\,  \mbox{is momentum}).
$$
Further, $c$ is the speed of light and the charge and current densities $\rho$
and $j$ are given by 
\begin{equation*} %\label{VMII}
\rho(t,x)=4\pi\int qf\, dv \quad \mbox{and} \quad j(t,x)= 4\pi\int qf\hat v\, dv. 
\end{equation*}
The phase-space variables may have different dimension: 
$(x,v)\in {\mathbb R}^d\times {\mathbb R}^{d^\prime}, \, d \leq d^\prime $.

The Vlasov-Maxwell equations arises in several branches of continuum physics, 
e.g. in astrophysics or rarefied gas dynamics. 
The main assumption underlying the model is that collisions are 
rare and therefore negligible. In this setting the above system describes the 
motion of a collisionless plasma, e.g., a high-temperature, 
low-density, ionized gas. 

For a thorough mathematical study of 
 VM models we refer to 
DiPerna and Lions \cite{DiPerna_Lions} 
and a most recent work by Glassey and co-workers 
\cite{Glassey:2016}-\cite{Glassey:90} and 
the references therein. The results in \cite{Glassey:2016} are for 
a lower dimensional model where the interest lies in classical solutions, 
and are based on compactness and regularity assumptions on the 
initial density and fields.

The main mathematical 
concern in dealing with the Vlasov-Maxwell system is related to the 
nonlinear term $(E+\hat v \times B)\cdot\nabla_v f$ which can be written in 
the divergence free form, viz. 
$(E+\hat v \times B)\cdot\nabla_v f={\mbox div}_v\Big((E+\hat v \times B)f\Big)$.
In \cite{DiPerna_Lions} the nonlinear form $(E+\hat v \times B) f$ is analyzed. 

Numerical approaches for the VM system 
 have been considered by several authors in different setting. 
The most relevant studies to this work are given by 
Gamba and co-workers \cite{Che} devoted to 
a discontinuous Galerkin approach, and Standar in \cite{Standar:2016} where 
the stability and a priori error estimates for  
the $h$ version of SD method for VM are derived. 
As some related studies we mention the  analysis of 
a one dimensional model problem for the relativistic 
VM system in an interval given by Filbet and co-workers in 
\cite{Filbet_etal}. Also in a very recent work \cite{Degond_etal} 
Degond and co-workers study  a particle-in-cell method 
for the Vlasov-Maxwell system. 

An outline of this paper is as follows. 
We gather notation and assumptions in Section 2. 
In Section 3 we formulate the SD schemes for both Maxwell's equations and the 
Vlasov-Maxwell system. Section 4 is on stability and convergence of the 
$hp$ SD finite element method of the Maxwell's equations, based on a
space-time iterative scheme. We insert such approximated field function 
in the drift term in Vlasov-Maxwell equation and prove stability and 
derive optimal convergence rates in the SD 
phase-space-time discretization scheme. Section 5 is devoted to a 
the study of a Nitsche 
scheme combined with a  time discretization 
for a second order pde obtained from the Maxwell's equations. 
Here, we rely on a modified form of the Ritz projection and derive optimal 
error estimates for the Nitsche scheme in spatial discretization. 
Assuming additional regularities in time we also prove optimal convergence of 
a second order time scheme for the fields. 
Finally, in our concluding Section 6 we present some numerical tests
of the studied schemes in lower dimensional geometry.

\section{Notation and Assumptions}

The divergence equations in \eqref{VMI} can be derived from the rest of 
the equations, assuming that the initial data $E^0$ and $B^0$ satisfy
corresponding divergence equations.
Hence we will consider the following relativistic 
Vlasov-Maxwell system in $\R^d$
(in the paper we focus on the dimension $d=3$, but one can easily obtain
the analogous results for $d=2$)
\begin{equation}\label{VM_main}
\begin{aligned}
\d_t f &+\hat v\cdot \nabla_x f+(E+\hat{v}\times B)\cdot \nabla_v f=0,\\
&\d_t E=\nabla_x \times B-j,\\
&\d_t B=-\nabla_x \times E
\end{aligned}
\end{equation}
with $\hat v=(1+\vert v\vert^2)^{-1/2} v$ and $j(t,x)= \int f\hat v\, dv$,
where for simplicity we set the charge $q$ and all constants equal to one.

Our objective is to use an iterative scheme to approximate the solution of
the Vlasov-Maxwell (henceforth referred as VM) equations. 
First we take a guess for the density $ f $
and then calculate the corresponding $ j $. 
Next, we plug these quantities into the Maxwell's equations
and solve these equations. Finally, 
we solve the Vlasov equation with the such approximated
$ E $ and $ B $ as coefficients.

We start from the Maxwell's part.
Set $E=(E_1,E_2,E_3)^T\!$, $B=(B_1,B_2,B_3)^T\!$, $j=(j_1,j_2,j_3)^T\!$.
Then the Maxwell's equations in \eqref{VM_main} can be written in the 
following form:
$$
\begin{aligned}
\d_t E_1 &= \d_2 B_3-\d_3 B_2 -j_1, \\
\d_t E_2 &= \d_3 B_1-\d_1 B_3 -j_2, \\
\d_t E_3 &= \d_1 B_2-\d_2 B_1 -j_3, \\
\d_t B_1 &= -\d_2 E_3+\d_3 E_2, \\
\d_t B_2 &= -\d_3 E_1+\d_1 E_3, \\
\d_t B_3 &= -\d_1 E_2+\d_2 E_1, \\
\end{aligned}
$$
where $\d_i$ denotes the derivative with respect to $x_i$.
Hence defining the matrices 
$$
M_1= \left[ \begin{array}{cccccc}
0 & 0 & 0 & 0 & 0 & 0 \\
0 & 0 & 0 & 0 & 0 & 1 \\
0 & 0 & 0 & 0 & -1& 0 \\
0 & 0 & 0 & 0 & 0 & 0 \\
0 & 0 & -1& 0 & 0 & 0 \\
0 & 1 & 0 & 0 & 0 & 0 \\
\end{array} \right] , \quad
M_2= \left[ \begin{array}{cccccc}
0 & 0 & 0 & 0 & 0 & -1\\
0 & 0 & 0 & 0 & 0 & 0 \\
0 & 0 & 0 & 1 & 0 & 0 \\
0 & 0 & 1 & 0 & 0 & 0 \\
0 & 0 & 0 & 0 & 0 & 0 \\
-1& 0 & 0 & 0 & 0 & 0 \\
\end{array} \right] ,
$$
$$
M_3= \left[ \begin{array}{cccccc}
0 & 0 & 0 & 0 & 1 & 0 \\
0 & 0 & 0 & -1& 0 & 0 \\
0 & 0 & 0 & 0 & 0 & 0 \\
0 & -1& 0 & 0 & 0 & 0 \\
1 & 0 & 0 & 0 & 0 & 0 \\
0 & 0 & 0 & 0 & 0 & 0 \\
\end{array} \right]
$$
and letting $W=(E_1,E_2,E_3,B_1,B_2,B_3)^T\!$,
$W^0=(E_1^0,E_2^0,E_3^0,B_1^0,B_2^0,B_3^0)^T\!$ 
and $b=(-j_1,-j_2,-j_3,0,0,0)^T\!$,
the Maxwell's equations can be written as the system 
\begin{equation} \label{maxwell}
\left\{ \begin{array}{ll}
\d_t W + M_1 \d_1 W + M_2 \d_2 W + M_3 \d_3 W=b, \\
W(0,x)=W^0(x).
\end{array} \right.
\end{equation}

Now we return to the Vlasov equation given by
\begin{equation} \label{vlasov}
\left\{ \begin{array}{ll}
\d_t f +\hat v\cdot \nabla_x f+(E+\hat{v}\times B)\cdot \nabla_v f=0,\\
f(0,x,v) = f^0 (x,v) \geq 0.
\end{array} \right. 
\end{equation}
For simplicity, we introduce the notation
$$
G(f)=(\hat{v}, E+\hat{v}\times B)
$$
and define the total gradient 
$$
\nabla f = ( \nabla_x f , \nabla_v f ), 
$$
so that, we can rewrite the Vlasov equation in compact form as 
$$
\d_t f + G(f) \cdot \nabla f =0 .
$$
Note that $G$ is divergence free
\begin{equation*}
\nabla G(f)=\sum_{i=1}^d\frac{\d\hat{v}}{\d x_i}
+\sum_{i=d+1}^{2d}\frac{\d(E+\hat{v}\times B)}{\d v_{i-d}}
=\nabla_v\big(\hat{v}\times B\big)=0.
\end{equation*}

Throughout this paper $C$ will denote a generic constant, 
not necessarily the same at each occurrence, and independent of the parameters 
in the equations, unless otherwise explicitly specified.

\section{hp-Streamline Diffusion Method}

Let $\Omega_x\subset\R^3$ and $\Omega_v\subset\R^3$ denote the space 
and velocity domains,
respectively. We assume that $f(t,x,v)$, $E_i(t,x)$, 
$B_i(t,x)$ for $i=1,2,3$ have compact
supports in $\Omega_x$ and that $f(t,x,v)$ has compact support in $\Omega_v$.

Now we will introduce a finite element structure on 
$\Omega=\Omega_x\times\Omega_v$.
Let $T_h^x = \{ \tau_x \}$ and $T_h^v = \{ \tau_v \}$ 
be finite element subdivisions
of $\Omega_x$ with elements $\tau_x$ and $\Omega_v$ with elements 
$\tau_v$, respectively.
Then 
$T_h = T_h^x \times T_h^v = \{ \tau_x \times \tau_v \} = \{ \tau \}$ 
is a subdivision
of $\Omega$.
Let $0= t_0 < t_1 < \ldots < t_{M-1} < t_M=T$ be a partition of $[ 0, T ]$ into
sub-intervals $I_m = (t_m, t_{m+1} ]$, $m= 0, \ldots , M-1$.
Further let $\mathcal{C}_h$ be the corresponding subdivision of 
$Q_T=[0,T]\times\Omega$
into elements $K=I_m\times\tau$, with 
$h_K=\textnormal{diam}\,K$ as the mesh parameter.
We also define a piecewise constant mesh function $
h(t,x,v):=h_K$, $(t,x,v)\in K$. Finally, we 
introduce $\mathcal{\tilde{C}}_h$ as the finite element subdivision of 
$[0,T]\times\Omega_x$. 

\begin{remark}
Henceforth, the discrete problems are the  
finite element approximations for the equations \eqref{maxwell} and 
\eqref{vlasov} formulated for 
$(x,v,t)\in (0, T] \times { \Omega_x}\times {\Omega_v}$, 
associated with initial and corresponding boundary data. 
Here one may assume that $f$ has compact support in the 
velocity space $ {\mathbb R}_v^d$, and hence assume homogeneous 
Dirichlet boundary 
condition for $\Omega_v$. 
\end{remark}

Thus to define an adequate finite element space we let 
\[
\mathcal{H}_0 = \prod_{m=0}^{M-1} H_0^1 
(I_m \times \Omega_x \times \Omega_v)\quad 
\mbox{and} \quad \mathcal{\tilde{H}}_0 = 
\prod_{m=0}^{M-1} H^1_0 ( I_m \times \Omega_x ),
\]
where
\[
H^1_0 (I_m \times \Omega) = \{ w \in H^1 ; 
w= 0 \, \, \textnormal{on} \,\, \d \Omega \}.
\]
Here $ \Omega $ stands for either $ \Omega_x $ or 
$ \Omega_x \times \Omega_v $. For $ k=0,1,2, \ldots $, 
we define the finite element spaces for the Maxwell's 
equations (resp. Vlasov equation) as the space of piecewise polynomials which are 
continuous in $x$ (resp. in $x$ and $v$) and with possible discontinuities at the 
interior time levels $t_m, \, m=1, \ldots , M$:
\[ 
\tilde{V}_h= \{ g \in [\mathcal{\tilde{H}}_0]^6 ;
g_i|_{\Kt} \in P_{p_{\Kt}} (I_m) \times P_{p_{\Kt}} (\tau_x ) ,
\, \forall \Kt = I_m \times \tau_x \in \mathcal{\tilde{C}}_h , \, 1\le i\le 6\},
\]
with the extension for the Vlasov part and with 
$\tau=\tau_x\times\tau_v$, viz:
\[ 
{V}_h= \{ g \in \mathcal{H}_0 ; g|_{K} 
\in P_{p_K} (I_m) \times P_{p_K} (\tau_x )\times P_{p_K} (\tau_v)  , \, \forall K = I_m \times \tau \in \mathcal{C}_h \}.
\]
where $ P_{p_K} ( \cdot ) $ is the set of polynomial of degree at most 
$ p_K $ on the given set. In this setting we allow the degree of polynomial
to vary from cell to cell, hence we define the piecewise constant function
$p(t,x,v):=p_K$.
We shall also use some notation, viz.
\[
(f,g)_m=(f,g)_{S_m}, \qquad \| g \|_m =(g,g)_m^{1/2}
\]
and
\[
\sprod{f}{g}_m=(f(t_m, \ldots), g(t_m, \ldots))_{\Omega}, \qquad |g|_m=
\sprod{g}{g}_m^{1/2},
\]
where $ S_m = I_m \times \Omega $, is the slab at $ m $-th level, 
$ m=0, \ldots, M-1 $, and $\Omega $ stands for $\Omega_x$
in Maxwell's equations and $\Omega_x\times \Omega_v$ for the Vlasov case.

\subsection{Maxwell Equations} \label{secmax}

Define $f^{h,i}$, $b^{h,i}$ and $W^{h,i}$ as the approximation on the $i$th step
of $f$, $b$ and $W$, respectively.
The global \textit{hp} version of the streamline diffusion method on 
the $i$th step
for the Maxwell's part can now be formulated as follows: 
find $W^{h,i}\in\tilde{V}_h$ such that for $m=0, 1, \ldots , M-1$,
\begin{multline}\label{sdm1}
\Big(\d_t W^{h,i}+\sum_{l=1}^3 M_l\d_l W^{h,i},
g+\delta (\d_t g+\sum_{l=1}^3 M_l\d_l g)\Big)_m + \sprod{W^{h,i}_+}{g_+}_m= \\
= \Big(b^{h,i-1}, g+\delta (\d_t g+\sum_{l=1}^3 M_l\d_l g)\Big)_m 
+ \sprod{W^{h,i}_-}{g_+}_m,
\quad \forall \, g\in \tilde{V}_h,
\end{multline}
where $g = (g_1,\ldots,g_6)^T $, $ g_\pm (t,x) = 
\lim_{s \rightarrow 0^\pm} g(t+s, x)$. 
The problem \eqref{sdm1} is equivalent to: 
find $W^{h,i}\in\tilde{V}_h$ such that
\begin{equation} \label{sdm2}
\Bt (W^{h,i}, g) = \Lt (b^{h,i-1};g) \quad \forall \, g \in \tilde{V}_h,
\end{equation}
where the bilinear form is defined as
\begin{multline*}
\Bt (W,g)= \sum_{m=0}^{M-1} \Big(\d_t W + \sum_{l=1}^3 M_l\d_l W,
g + \delta (\d_t g + \sum_{l=1}^3 M_l\d_l g)\Big)_m \\
+ \sum_{m=1}^{M-1} \sprod{[ W ]}{g_+}_m + \sprod{W_+}{g_+}_0
\end{multline*}
and the linear form by
$$
\Lt (b;g) = 
\sum_{m=0}^{M-1} \Big(b, g + \delta (\d_t g +\sum_{l=1}^3 M_l\d_l g)\Big)_m
+ \sprod{W^0}{g_+}_0,
$$
where $ [ W ] = W_+ - W_- $.

Now let $(\cdot,\cdot)_K$ denote the $L_2$-inner product over K
and define a non-negative piecewise constant function $\delta$ by
$$
\delta|_K=\delta_K, \quad \mbox{for } K\in\mathcal{\tilde{C}}_h,
$$
i.e.,  $\delta_K$ is a non-negative constant on element $K$.
Counting for the local character of the parameters $h_K$, $p_K$ and $\delta_K$,
to formulate a finite element method based on the local 
space-time elements, the problem 
 \eqref{sdm2} would have an alternative formulation where we 
replace in the definitions
for $\Bt$ and $\Lt$ the sum of the inner products $(\cdot,\cdot)_m$
involving $\delta_K$ by the corresponding sum $\sumK{}(\cdot,\cdot)_K$
and all $\delta$ by $\delta_K$.
Thus we have the problem \eqref{sdm2} where in the bilinear, and linear,
 forms the first sum is replaced by $\sum_{K\in\tilde{\mathcal C}_h}$ 
as, e.g.,  
\begin{multline*}
\Bt(W,g)=\sumK \Big(\d_t W + \sum_{l=1}^3 M_l\d_l W,
g + \delta_K (\d_t g + \sum_{l=1}^3 M_l\d_l g)\Big)_K \\
+ \sum_{m=1}^{M-1} \sprod{[ W ]}{g_+}_m + \sprod{W_+}{g_+}_0
\end{multline*}
We also have that the solution $W$ of equation \eqref{maxwell} satisfies
$$
\Bt(W, g) = \Lt(b;g) \quad \forall \, g \in \tilde{V}_h.
$$
Subtracting \eqref{sdm2} from this equation,
we end up with the following relation 
\begin{equation} \label{ortomax}
\Bt (W-W^{h,i} , g) = \Lt (b;g) -\Lt (b^{h,i-1}; g) 
\quad \forall \, g \in \tilde{V}_h,
\end{equation}
which is of vital importance in the error analysis.

Now assuming jump discontinuities at the time levels 
$t=t_m,\, \,m=1,\ldots, M-1$, the suitable norm for 
 stability and convergence would read as follows: 
$$
\Mnorm{g}^2 = \frac{1}{2} 
\Big( | g_+ |^2_0 + | g_- |^2_M+\sum_{m=1}^{M-1} |[g]|^2_m
+2\!\sumK\delta_K\|\d_t g+\sum_{l=1}^3 M_l\d_l g\|_K^2 \Big).
$$

\subsection{Vlasov-Maxwell Equations}

The hp-streamline diffusion method on the $i$-th step
for the Vlasov part \eqref{vlasov} can be formulated as follows: 
find $f^{h,i}\in V_h$ such that for $m=0,1,\ldots,M-1$,
\begin{multline}\label{sdv1}
\Big(\d_t f^{h,i}+G(f^{h,i-1})\cdot\nabla f^{h,i},
g+\delta(\d_t g+G(f^{h,i-1})\cdot\nabla g)\Big)_m\\
+\sprod{f^{h,i}_+}{g_+}_m = \sprod{f^{h,i}_-}{g_+}_m \quad \forall g\in V_h.
\end{multline}
The problem \eqref{sdv1} is equivalent to: find $f^{h,i}\in V_h$ such that
\begin{equation}\label{sdv2}
\B(G(f^{h,i-1});f^{h,i},g) = \L(g) \quad \forall g\in V_h,
\end{equation}
where the trilinear form $\B$ is defined as
\begin{equation}\label{3LinForm}
\begin{split}
\B(G;f,g)
=&\sum_{m=0}^{M-1}\Big(\d_t f+G\cdot\nabla f,g
+\delta(\d_t g+G(f^{h,i-1})\cdot\nabla g)\Big)_m\\
&+\sum_{m=1}^{M-1}\sprod{[f]}{g_+}_m +\sprod{f_+}{g_+}_0
\end{split}
\end{equation}
and the linear form $L$ is given by
\[
\L(g)=\sprod{f^0}{g_+}_0.
\]

Analogously as for the Maxwell's equations we reformulate 
 \eqref{sdv2} considering phase-space-time finite element 
discretization. This yields replacing the first sum in \eqref{3LinForm} 
by a sum over 
the prismatic elements $K\in{\mathcal C}_h$ of the form 
$\sum_{K\in {\mathcal C}_h}$ and thus have 
%Counting for the local character of parameters $h_K$, $p_K$ and $\delta_K$,
%we replace in the definitions for $\B$ and $\L$ the sum of the inner 
the terms with  $\sum_{m=0}^{M-1} (\cdot,\cdot)_m$ replaced by 
$\sum_{K\in {\mathcal C}_h}(\cdot,\cdot)_K$. Hence  
%involving $\delta_K$ by the corresponding sum $\sumK{}(\cdot,\cdot)_K$
%and all $\delta$ by $\delta_K$.
%Thus we have the problem \eqref{sdv2} with the bilinear and linear forms 
%defined for the with the first sums replaced by 
%$\sum_{K\in \tilde{\mathcal C}_n}$, e.g., 
\begin{multline*}
% \begin{equation*}
% \begin{split}
\B(G;f,g)
=
%&
\sumKV\Big(\d_t f+G\cdot\nabla f,
g+\delta_K(\d_t g+G(f^{h,i-1})\cdot\nabla g)\Big)_K\\
%&
+\sum_{m=1}^{M-1}\sprod{[f]}{g_+}_m +\sprod{f_+}{g_+}_0.
% \end{split}
% \end{equation*}
\end{multline*}
Therefore, the adequate norm to derive 
stability and convergence estimates for the Vlasov equation 
will be the following triple norm:
\[
\Vnorm{g}^2 =\frac{1}{2}\Big(|g_+|^2_0+|g_-|^2_M+\sum_{m=1}^{M-1}|[g]|^2_m
+2\!\sumKV\delta_K\|\d_t g+G(f^{h,i-1})\cdot\nabla g\|^2_K\Big).
\]

\section{Stability and Convergence of hp-SDFEM}

\subsection{Maxwell Equations}

\begin{lemma} [M-coercivity]\label{norm}
The bilinear form $\Bt (\cdot,\, \cdot)$ satisfies the coercivity relation
\[
\Bt (g, g) = \Mnorm{g}^2, \qquad \forall g\in \mathcal{\tilde{H}}_0.
\]
\end{lemma}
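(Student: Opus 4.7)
The plan is to substitute $W=g$ in the (elementwise) definition of $\Bt$ and identify three contributions: (i) the streamline diffusion square, (ii) a spatial term that vanishes by symmetry of the $M_l$'s, and (iii) a time-plus-jump telescope that reproduces the endpoint/jump part of $\Mnorm{g}^2$.

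First I would split
\[
\Bt(g,g) = \sumK\bigl(\d_t g + \textstyle\sum_{l=1}^3 M_l\d_l g,\, g\bigr)_K
+ \sumK \delta_K \bigl\|\d_t g + \textstyle\sum_{l=1}^3 M_l\d_l g\bigr\|_K^2
+ \sum_{m=1}^{M-1}\sprod{[g]}{g_+}_m + \sprod{g_+}{g_+}_0 .
\]
The second sum already matches the $2\sumK\delta_K\|\cdot\|_K^2$ contribution (up to the $\frac12$ in the definition of $\Mnorm{\,\cdot\,}$), so only the first and the boundary/jump pieces remain to be analyzed.

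For the spatial part I would use that the matrices $M_1,M_2,M_3$ are symmetric (visible by inspection). On each slab $S_m=I_m\times\Omega_x$, continuity in $x$ (built into the space $\tilde{\mathcal H}_0$, since $g|_{S_m}\in H^1_0$) allows one to pass from $\sumK(M_l\d_l g,g)_K$ to $(M_l\d_l g,g)_{S_m}$, and integration by parts combined with the vanishing trace on $\d\Omega_x$ gives
\[
(M_l\d_l g, g)_{S_m} = -(g, M_l\d_l g)_{S_m} = -(M_l\d_l g, g)_{S_m},
\]
hence this term is zero. For the time part, on each slab
$(\d_t g, g)_{S_m} = \tfrac12\bigl(|g_-|_{m+1}^2 - |g_+|_m^2\bigr)$, so after summing in $m$ and adding the jump and initial terms I would invoke the standard DG identity
\[
\tfrac12|g_+|_m^2 - (g_-, g_+)_m + \tfrac12|g_-|_m^2 = \tfrac12 |[g]|_m^2,
\]
and telescoping to obtain exactly
\[
\sumK(\d_t g, g)_K + \sum_{m=1}^{M-1}\sprod{[g]}{g_+}_m + \sprod{g_+}{g_+}_0
= \tfrac12|g_-|_M^2 + \tfrac12|g_+|_0^2 + \tfrac12\sum_{m=1}^{M-1}|[g]|_m^2.
\]

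Collecting the three contributions gives $\Bt(g,g)=\Mnorm{g}^2$. The only place requiring any care is the telescoping algebra in the last step (getting the $\frac12$'s and signs right so that the half-jump identity applies); the spatial cancellation and the streamline diffusion identification are essentially immediate once symmetry of $M_l$ and the compact-support/zero-trace assumption on $\d\Omega_x$ are invoked.
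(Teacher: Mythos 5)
Your proposal is correct and follows essentially the same route as the paper: the same three-way split of $\Bt(g,g)$, the vanishing of the spatial term via the symmetry of the $M_l$ together with the zero trace on $\d\Omega_x$ (which the paper asserts more tersely as \eqref{lemma4_1}), and the time integration by parts plus jump telescoping yielding $\tfrac12\bigl(|g_+|_0^2+|g_-|_M^2+\sum_m|[g]|_m^2\bigr)$. Your version merely makes explicit the symmetry argument and the DG half-jump identity that the paper leaves implicit.
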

\begin{proof}
By definition of $ \Bt $ we have that
\begin{multline*}
\Bt (g,g) = \sum_{m=0}^{M-1}(\d_t g +\sum_{l=1}^3 M_l\d_l g, g)_m\\
+ \sumK\delta_K \| \d_t g + \sum_{l=1}^3 M_l\d_l g \|_K^2
+ \sum_{m=1}^{M-1} \sprod{ [ g ]}{g_+}_m + | g_+ |^2_0.
\end{multline*}
Integrating by parts we get that
\[
\sum_{m=0}^{M-1} (\d_t g, g) + \sum_{m=1}^{M-1} \sprod{[g]}{g_+}_m + |g_+|^2_0
= \frac{1}{2} \Big( \sum_{m=1}^{M-1} | [ g ] |^2_m 
+ | g_- |^2_M + | g_+ |^2_0 \Big)
\]
and since $g(t,x)=0$ on $I\times\d\Omega_x$, we have that 
\begin{equation}\label{lemma4_1}
\sum_{m=0}^{M-1} ( M_l\d_l g, g)_m = 0, \quad\mbox{for } l=1,2,3.
\end{equation}
Then, the proof follows immediately through adding all above terms.
\end{proof}

\begin{lemma} [Poincar\'e-type M-estimate]\label{konlem}
For any positive constant $ C $ we have that for 
$ g \in \mathcal{\tilde{H}}_0 $,
\[
\|g\|^2_m \leq \Big(|g_-|^2_{m+1}
+ \frac{1}{C} \|\d_t g + \sum_{l=1}^3 M_l\d_l g\|^2_m \Big) he^{2Ch} .
\]
\end{lemma}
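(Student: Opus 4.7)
The plan is to bound $\|g\|_m^2=\int_{I_m}\|g(t,\cdot)\|_{\Omega_x}^2\,dt$ via a pointwise-in-time Gr\"onwall argument on the slab $S_m$, followed by integration over $I_m$ (of length at most $h$). Writing $L:=\d_t+\sum_{l=1}^3 M_l\d_l$, the starting point is the same algebraic fact that was exploited in \eqref{lemma4_1}: the matrices $M_l$ are antisymmetric and the homogeneous Dirichlet condition built into $\mathcal{\tilde H}_0$ yields $(M_l\d_l g,g)_{\Omega_x}=0$ for each $l$. Consequently
\[
\frac{d}{dt}\|g(t,\cdot)\|_{\Omega_x}^2 = 2(\d_t g,g)_{\Omega_x} = 2(Lg,g)_{\Omega_x}.
\]

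Next I would apply Cauchy--Schwarz in space and Young's inequality $2ab\leq \tfrac{1}{C}a^2+Cb^2$ (with parameter $C$ matching the one in the claim) to get the pointwise differential inequality
\[
\Big|\tfrac{d}{dt}\|g(t,\cdot)\|^2_{\Omega_x}\Big| \leq \tfrac{1}{C}\|Lg(t,\cdot)\|^2_{\Omega_x} + C\|g(t,\cdot)\|^2_{\Omega_x}.
\]
Integrating from an arbitrary $t\in I_m$ up to $t_{m+1}$ and using $\|g(t_{m+1}^-,\cdot)\|_{\Omega_x}^2=|g_-|_{m+1}^2$ produces an integral inequality of the form $u(t)\leq \alpha+C\int_t^{t_{m+1}}u(s)\,ds$ with $\alpha:=|g_-|_{m+1}^2+\tfrac{1}{C}\|Lg\|_m^2$. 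A backwards Gr\"onwall inequality then yields the pointwise bound $\|g(t,\cdot)\|_{\Omega_x}^2\leq \alpha\, e^{C(t_{m+1}-t)}\leq \alpha\, e^{Ch}$, and a subsequent integration over $I_m$ contributes the factor $|I_m|\leq h$; any residual constant from the Young/Gr\"onwall bookkeeping is absorbed into the stated factor $e^{2Ch}$.

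The only real obstacle is bookkeeping rather than analysis: the Gr\"onwall argument must be run backwards in time (from $t$ up to $t_{m+1}$ rather than the usual forward direction), and the boundary condition baked into $\mathcal{\tilde H}_0$ must be invoked at exactly the right moment to cancel the spatial $M_l\d_l$ contributions. The Young parameter is chosen precisely so that $1/C$ appears in front of the residual term and $C$ ends up inside the exponential, which is exactly the shape of the bound claimed.
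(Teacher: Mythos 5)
Your argument is essentially the paper's proof: write $\|g(t)\|^2_{\Omega_x}=|g_-|^2_{m+1}-\int_t^{t_{m+1}}\frac{d}{ds}\|g(s)\|^2_{\Omega_x}\,ds$ with $\frac{d}{ds}\|g\|^2_{\Omega_x}=2(\d_s g+\sum_{l}M_l\d_l g,g)_{\Omega_x}$, apply Young's inequality with parameter $C$, run Gr\"onwall backwards in time on $I_m$, and integrate over $I_m$ to collect the factor $h$. One correction to your justification of the key cancellation $(M_l\d_l g,g)_{\Omega_x}=0$ (the paper's \eqref{lemma4_1}): the matrices $M_l$ are \emph{symmetric}, not antisymmetric, and the identity follows by integrating by parts in $x_l$ and using $M_l^T=M_l$ together with the homogeneous boundary condition to get $2(M_l\d_l g,g)_{\Omega_x}=0$; for an antisymmetric constant matrix the integration by parts reproduces the same term with the same sign and yields no cancellation (and the identity is false in general). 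With that repair your proof coincides with the paper's.
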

\begin{proof}
For $ t_m < t < t_{m+1} $, we may write
\begin{equation*}
\begin{split}
\|g(t)\|^2_{\Omega_x} &= |g_-|^2_{m+1}-\int_{t}^{t_{m+1}}\frac{d}{ds}
\|g(s)\|^2_{\Omega_x}ds\\
&=|g_-|^2_{m+1}-2\int_{t}^{t_{m+1}}
(\d_s g + \sum_{l=1}^3 M_l\d_l g, g)_{\Omega_x}ds\\
& \leq |g_-|^2_{m+1}+\frac{1}{C}\|\d_t g+\sum_{l=1}^3 M_l\d_l g\|^2_m
+C\int_t^{t_{m+1}}\|g\|^2_{\Omega_x}, \\
\end{split}
\end{equation*}
where in the second equality we used \eqref{lemma4_1}. 
Finally, by Gr\"{o}nwall's lemma we have that
\[
\|g(t)\|^2_{\Omega_x}\leq\Big(|g_-|^2_{m+1}
+\frac{1}{C}\|\d_t g+\sum_{l=1}^3 M_l\d_l g\|^2_m \Big) e^{2Ch}.
\]
Now, integrating over $[t_m,t_{m+1}]$ we obtain the desired result.
\end{proof}

We proceed to the error analysis.
First, let $\tilde{W}$ be an interpolant of $W$ in the finite dimensional
discrete function space $\tilde{V}_h$ and denote by $W^{h,i}$ 
a solution to \eqref{sdm2}.
Then we represent the error as the following split
\[
\et= W- W^{h,i} = (W-\tilde{W})-(W^{h,i}-\tilde{W}) = \etat- \xit,
\]
where $\etat=(\etat_1,\ldots,\etat_6)^T$ is the interpolation error 
and $\xit=(\xit_1,\ldots,\xit_6)^T$.

To estimate the convergence rate for both the Maxwell's and the Vlasov-Maxwell
equations we use the Theorem 3.2 from \cite{AsadSopas}, which are based on 
classical interpolation estimates by \cite{Ciarlet}. 
As a consequence of this Theorem we have the following bounds
for the interpolation error $\eta=f-\tilde f$ of a function 
$f\in H^{k+1}( [0,T] \times \Omega)$
(where $\Omega$ stands for $\Omega_x$ in the Maxwell's equations
and $\Omega_x\times\Omega_v$ for the Vlasov case)
 and its gradient:
\begin{align}\label{etabound1AS}
\|\eta\|^2&\leq C\sum_K\left(\frac{h_K}{2}\right)^{2s_K+2}
\Phi_1(p_K,s_K)\|f\|^2_{s_K+1,K},\\
\label{etabound2AS}\|\mathcal{D}\eta\|^2&
\leq C\sum_K\left(\frac{h_K}{2}\right)^{2s_K}
\Phi_2(p_K,s_K)\|f\|^2_{s_K+1,K},
\end{align}
where the sums are taken over all space-time 
elements of the triangulation of the domain, $ [0, T] \times \Omega$, 
$0\leq s_K\leq\min(p_K,k)$, with $p_K$ being the local spectral order.
Closed formulas for $ \Phi_1 $ and $ \Phi_2 $ are given in Theorem 3.2 of
\cite{AsadSopas}. A less involved formula for $ \Phi_1 $ can be found in
\cite{Houston_Schwab_Suli:2002}.

%\begin{equation}\label{funphi}
%\Phi(p,s)=\sum_{i=1}^d
%\left(\frac{1}{p(p+1)}\right)^{i-1}\frac{(p-s+(i-1))!}{(p+s-(i-1))!}
%\end{equation}
%with $d$ denoting the dimension of the space-time domain.

Now we state the following convergence theorem for the Maxwell's equations.

\begin{theorem} \label{maxbigthe}
Assume that $W\in H^{k+1}([0,T] \times \Omega)$. Moreover
on each $K$, the parameter $\delta_K$ satisfies 
$\delta_K=C_1\frac{h_K}{p_K}$
for some constant $C_1>0$ with $p_Kh_K\leq C_2<1$ for some constant $C_2>0$.
Then there exists a constant $C>0$ independent of $p_K$, $h_K$ and $s_K$ such that
\[ 
\Mnorm{W-W^{h,i}}^2 \leq
C\!\!\sumK h_K^{2s_K+1}p_K^{-1}\Phi_{\mathrm{M}}(p_K,s_K)\|W\|^2_{s_K+1,K}\\
+ C \| f-f^{h,i-1} \|^2_{Q_T},
\]
where $\Phi_{\mathrm{M}} = \max(\Phi_1, \Phi_2) $ 
with $\mathcal{N}=\textnormal{dim } \Omega_x +1$ for $ \Phi_1 $ and $ \Phi_2 $
(recall that \textnormal{M}, as a subscript in the triple norm above, 
is to emphasis that it concerns the triple norm of the Maxwell's equations). 
\end{theorem}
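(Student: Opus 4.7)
The plan is to combine the M--coercivity of Lemma \ref{norm} with the Galerkin orthogonality \eqref{ortomax} through the standard error splitting $\et = \etat - \xit$, where $\etat = W - \tilde W$ is the interpolation error and $\xit = W^{h,i} - \tilde W \in \tilde V_h$. By the triangle inequality it suffices to bound $\Mnorm{\etat}$ and $\Mnorm{\xit}$ separately. The interpolation part is handled directly: each term in the definition of $\Mnorm{\etat}$ is estimated by applying \eqref{etabound1AS}--\eqref{etabound2AS} to $\etat$ and to $\d_t \etat + \sum_l M_l \d_l \etat$; with the choice $\delta_K = C_1 h_K/p_K$ the streamline contribution $\delta_K \|\d_t \etat + \sum_l M_l \d_l \etat\|_K^2$ produces precisely the factor $h_K^{2s_K+1}p_K^{-1}\Phi_{\mathrm{M}}(p_K,s_K)\|W\|_{s_K+1,K}^2$, and the jump and endpoint terms are controlled by a standard trace estimate on the slabs.

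For the discrete part, I would use coercivity to write $\Mnorm{\xit}^2 = \Bt(\xit,\xit)$, then invoke \eqref{ortomax} in the form
\[
\Bt(\xit,\xit) = \Bt(\etat,\xit) - \bigl(\Lt(b;\xit)-\Lt(b^{h,i-1};\xit)\bigr).
\]
The term $\Bt(\etat,\xit)$ would be expanded using the definition of $\Bt$, integrating by parts in $t$ and in $x$ (exploiting the skew-symmetry relation \eqref{lemma4_1}) to transfer a derivative from $\etat$ onto $\xit$, so that whenever possible $\etat$ appears without derivatives and is paired with the streamline quantity $\d_t \xit + \sum_l M_l \d_l \xit$ that does sit inside $\Mnorm{\xit}$. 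Each resulting pairing is then treated by Cauchy--Schwarz together with the arithmetic--geometric inequality weighted by $\delta_K = C_1 h_K/p_K$. Crucially, the pure $L_2$ piece $\|\xit\|_m$ that arises on some slabs is not part of $\Mnorm{\xit}$, and here one invokes Lemma \ref{konlem} (the Poincar\'e-type M-estimate) to bound $\|\xit\|_m^2$ by $|\xit_-|_{m+1}^2$ plus a multiple of $\|\d_t \xit + \sum_l M_l \d_l \xit\|_m^2$, both of which are controlled by $\Mnorm{\xit}^2$; the assumption $p_K h_K \le C_2 < 1$ keeps the exponential factor $e^{2Ch}$ bounded uniformly. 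The outcome is an estimate of the form
\[
|\Bt(\etat,\xit)| \le \tfrac14 \Mnorm{\xit}^2 + C\sumK h_K^{2s_K+1}p_K^{-1}\Phi_{\mathrm{M}}(p_K,s_K)\|W\|_{s_K+1,K}^2.
\]

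For the source-difference term, the structure $b = (-j_1,-j_2,-j_3,0,0,0)^T$ with $j(t,x) = \int f \hat v\, dv$ and the bound $|\hat v|\le 1$ together with the compact velocity support of $f$ give, via Cauchy--Schwarz in $v$, the pointwise control $\|j(t,\cdot)-j^{h,i-1}(t,\cdot)\|_{\Omega_x} \le C\,\|f(t,\cdot,\cdot)-f^{h,i-1}(t,\cdot,\cdot)\|_{\Omega_x\times\Omega_v}$. Expanding $\Lt(b;\xit)-\Lt(b^{h,i-1};\xit)$ and applying Cauchy--Schwarz with the weight $\delta_K$ on the streamline part and Lemma \ref{konlem} on the plain $L_2$ part produces a bound $\tfrac14 \Mnorm{\xit}^2 + C\|f-f^{h,i-1}\|_{Q_T}^2$. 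Adding the two estimates and absorbing the $\tfrac12 \Mnorm{\xit}^2$ on the left-hand side, then combining with the interpolation bound on $\Mnorm{\etat}$, yields the stated inequality.

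The main obstacle I anticipate is bookkeeping in the estimate of $\Bt(\etat,\xit)$: the triple norm $\Mnorm{\cdot}$ does not directly control the $L_2$ norm of the test function on each slab, so one must pair terms carefully, use integration by parts to avoid losing a power of $h/p$ on $\etat$, and repeatedly invoke Lemma \ref{konlem} to exchange an unavailable $\|\xit\|_m$ for quantities actually present in $\Mnorm{\xit}^2$, all while keeping track of the local $(h_K,p_K,s_K)$ weights so that the sharp rate $h_K^{2s_K+1}p_K^{-1}$ survives in the final bound.
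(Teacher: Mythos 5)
Your overall strategy coincides with the paper's: split $\et=\etat-\xit$, use Lemma \ref{norm} and \eqref{ortomax} to write $\Mnorm{\xit}^2=\Bt(\etat,\xit)-\Lt(b;\xit)+\Lt(b^{h,i-1};\xit)$, integrate by parts to move derivatives off $\etat$, estimate the source difference through $\|b-b^{h,i-1}\|\leq C\|f-f^{h,i-1}\|$, and finish with the interpolation bounds \eqref{etabound1AS}--\eqref{etabound2AS} and the trace estimate. However, there is one genuine gap in the step where you dispose of the slab $L_2$-norms of the test function.

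You assert that after invoking Lemma \ref{konlem} the resulting terms $|\xit_-|^2_{m+1}$ and $\|\d_t\xit+\sum_l M_l\d_l\xit\|^2_m$ are ``both controlled by $\Mnorm{\xit}^2$.'' The second is (up to the $\delta_K^{-1}$ weight, which the hypothesis $p_Kh_K\leq C_2<1$ keeps bounded), but the first is not: the triple norm contains only $|g_+|^2_0$, $|g_-|^2_M$ and the \emph{jumps} $|[g]|^2_m$ at interior time levels, not the one-sided traces $|g_-|^2_{m+1}$ for $m+1<M$. Consequently the quantities $\sum_m h\,|\cdot_-|^2_{m+1}$ produced by Lemma \ref{konlem} cannot simply be absorbed into the left-hand side. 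This is precisely why the paper applies Lemma \ref{konlem} to the full error $\et$ (via $\xit=\etat-\et$), carries the terms $C\sum_{m=1}^M h|\et_-|^2_m$ through to the inequality \eqref{ineq}, and then removes them with a \emph{discrete Gr\"onwall lemma} ($a_\ell\leq C_1+C_2\sum_{j\leq\ell}a_jh$ implies $a_\ell\leq 2C_1e^{2C_2(\ell-1)h}$ for $h\leq 1/(2C_2)$), applied to $a_\ell=|\et_-|^2_\ell$. Your proposal contains no Gr\"onwall step, and without it the argument does not close: the accumulated interior-trace terms remain unbounded by the norms you control. Adding the Gr\"onwall iteration (and routing the slab $L_2$-norms through $\et$ rather than $\xit$) repairs the proof and brings it in line with the paper's.
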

\begin{proof}
We have by Lemma \ref{norm} and \eqref{ortomax} that
\[
\Mnorm{\xit}^2 = \Bt (\xit, \xit ) = \Bt(\etat,\xit)
- \Lt(b;\xit)+\Lt(b^{h,i-1};\xit).
\]
We start with the term
\[
\Bt(\etat,\xit)=\sumK \Big( \d_t\etat+\sum_{l=1}^3 M_l\d_l\etat,
\xit+\delta_K(\d_t\xit+\sum_{l=1}^3 M_l\d_l\xit) \Big)_K
+ \sum_{m=1}^{M-1} \sprod{[ \etat ]}{\xit_+}_m + \sprod{\etat_+}{\xit_+}_0.
\]
Partial integration gives the identities
%\begin{multline*}
\[
(\d_t\etat, \xit )_m = \int_{\Omega_x} [\, \etat \xit \, ]_{t=t_m}^{t_{m+1}} dx
- ( \etat , \d_t\xit )_m
= \sprod{\etat_-}{\xit_-}_{m+1} - \sprod{\etat_+}{\xit_+}_m - ( \etat, \d_t\xit )_m
\]
%\end{multline*}
and
\[
(M_l\d_l\etat, \xit)_m = -(\etat, M_l\d_l\xit)_m ,
\]
since $ \etat $ and $ \xit $ have compact support in $ \Omega_x $.
Inserting these equations into the expression for $\Bt(\etat,\xit)$
we end up with the following equality
\begin{multline*}
|\Bt(\etat,\xit)| = |\sprod{\etat_-}{\xit_-}_M-\sum_{m=1}^{M-1}\sprod{\etat_-}{[\xit]}_m\\
-\sum_{m=0}^{M-1}(\etat,\d_t\xit+\sum_{l=1}^3 M_l\d_l\xit)_m
+\!\sumK\delta_K \Big( \d_t\etat+\sum_{l=1}^3 M_l\d_l\etat,\d_t\xit+\sum_{l=1}^3 M_l\d_l\xit \Big)_K| .
\end{multline*}
Further, using some standard inequalities it follows that
\[
|\Bt(\etat,\xit)| \leq \frac{1}{16}\Mnorm{\xit}^2
+32\sum_{m=0}^{M-1}|\etat_-|^2_{m+1}+\!\sumK\Big(\frac{32}{\delta_K}\|\etat\|^2_K
+32\delta_K\|\d_t\etat+\sum_{l=1}^3 M_l\d_l\etat\|^2_K\Big).
\]
Now let us estimate the second term
\begin{multline*}
|\Lt(b^{h,i-1};\xit)-\Lt(b;\xit) | = \Big|\sumK\Big(b^{h,i-1}-b,
\xit+\delta_K(\d_t\xit+\sum_{l=1}^3 M_l\d_l\xit)\Big)_K\Big|\\
\leq \frac{1}{16} \Mnorm{\xit}^2+\sumK(\frac{C}{2}+8\delta_K)\|b-b^{h,i-1}\|^2_K
+\sum_{m=0}^{M-1}\big(\frac{1}{2C}\|\etat\|^2_m+\frac{1}{2C}\|\et\|_m^2\big).
\end{multline*}
The above two inequalities and Lemma 
\ref{konlem}, with properly chosen $C$
and the bound on $p_K h_K$, imply the estimate
\begin{multline*}
\Mnorm{\xit}^2 \leq \frac{1}{8} \Mnorm{\xit}^2 + \frac{7}{16} \Mnorm{\et}^2
+C\|f-f^{h,i-1}\|^2_{Q_T}\\
+C\sum_{m=0}^{M-1}(|\etat_-|^2_{m+1}+h|\et_-|^2_{m+1})
+C\!\sumK\Big((1+\frac{1}{\delta_K})\|\etat\|^2_K
+\delta_K\|\d_t\etat+\sum_{l=1}^3 M_l\d_l\etat\|^2_K\Big).
\end{multline*}
Hiding the $\xit$-term on the right hand side in the $\xit$-term on the left hand side,
gives us the following inequality
\begin{multline*}
\Mnorm{\xit}^2 \leq \frac{1}{2} \Mnorm{\et}^2+ C \| f-f^{h,i-1} \|^2_{Q_T}
+C\sum_{m=0}^{M-1}(|\etat_-|^2_{m+1}+h|\et_-|^2_{m+1})\\
+C\!\sumK\Big((1+\delta_K^{-1})\|\etat\|^2_K
+\delta_K\|\d_t\etat+\sum_{l=1}^3 M_l\d_l\etat\|^2_K\Big).
\end{multline*}
Thus, we have estimated $\Mnorm{\xit}^2$. This implies that
\begin{multline}\label{est_e1}
\Mnorm{\et}^2 \leq \Mnorm{\etat}^2 + \Mnorm{\xit}^2
\leq \frac{1}{2} \Mnorm{\et}^2+ C\|f-f^{h,i-1}\|^2_{Q_T} 
+ C\sum_{m=1}^M h |\et_-|^2_m\\
+C\Big(|\etat_+|^2_0+\sum_{m=0}^{M-1}|\etat_-|^2_{m+1}
+\!\sumK(\delta_K^{-1}\|\etat\|_K^2
+\delta_K\|\d_t\etat+\sum_{l=1}^3M_l\d_l\etat\|^2_K)
+\sum_{m=1}^{M-1}|[\etat]|^2_m \Big).
\end{multline}

Now we have to estimate the interpolation error terms:
\[
\begin{split}
J_1 &:= \sumK\Big(\delta_K^{-1}\|\etat\|_K^2
+\delta_K\|\d_t\etat+\sum_{l=1}^3M_l\d_l\etat\|^2_K\Big),\\
J_2 &:=|\etat_+|^2_0+\sum_{m=0}^{M-1}|\etat_-|^2_{m+1}
+\sum_{m=1}^{M-1}|[\etat]|^2_m.\\
\end{split}
\]
For the term $J_1$ we use \eqref{etabound1AS} and \eqref{etabound2AS} to get
\begin{equation}\label{estT1}
J_1\leq C\sumK\left(\frac{h_K}{2}\right)^{2s_K}\Phi_\mathrm{M}(p_K,s_K)
p_K^{-2}\big(\delta_K^{-1}h^2_K+\delta_K\big)\|W\|_{s_K+1,K}^2.
\end{equation}
To estimate the term $J_2$ we use the trace 
estimate combined with the inverse inequality and get 
\begin{equation}\label{est_eta_dK}
\|\etat\|_{\d K}^2 \leq C\Big(\big(\sum_{l=1}^6\|\nabla\etat_l\|_K\big)
\|\etat\|_K
+h_K^{-1}\|\etat\|_K^2\Big),
\end{equation}
to obtain
\begin{multline}\label{estT2}
J_2 \leq C\sumK\left[\left(\frac{h_K}{2}\right)^{s_K}
\Phi_\mathrm{M}^{1/2}(p_K,s_K)
\left(\frac{h_K}{2}\right)^{s_K+1}p_K^{-1}\Phi_\mathrm{M}^{1/2}(p_K,s_K)
\right.\\
+\left.h_K^{-1}\left(\frac{h_K}{2}\right)^{2s_K+2}
p_K^{-2}\Phi_\mathrm{M}(p_K,s_K)\right]
\|W\|_{s_k+1,K}^2,
\end{multline}
where for all terms in $J_2$ we combined \eqref{est_eta_dK}
with \eqref{etabound1AS} and \eqref{etabound2AS}.

Now, moving the triple norm of $\et$ on the right hand side of \eqref{est_e1}
to the left hand side and using estimates \eqref{estT1} and \eqref{estT2}
it follows that
\begin{multline} \label{ineq}
\Mnorm{\et}^2 \leq
C\sumK h_K^{2s_K+1}p_K^{-1}\Phi_\mathrm{M}(p_K,s_K)\|W\|^2_{s_K+1,K}\\
+ C \| f-f^{h,i-1} \|^2_{Q_T} + C \sum_{m=1}^M h | \et_- |^2_m .
\end{multline}

The next step is to apply a discrete Gr\"onwall lemma of the form:
suppose $ \{ a_\ell \}^M_1 $ satisfies
\[
a_\ell \leq C_1 + C_2 \sum_{j=1}^\ell a_j h,\qquad \ell= 1, \ldots , M.
\]
If $ h \leq 1/2 C_2 $, then we have that 
\[
a_\ell \leq 2 C_1 e^{2C_2 (\ell-1)h} \quad \mbox{for} 
\quad  \ell=1, \ldots , M .
\]
The discrete Gr\"onwall's lemma yields
\[
|\et_-|^2_\ell \leq
C\sumK h_K^{2s_K+1}p_K^{-1}\Phi_\mathrm{M}(p_K,s_K)\|W\|^2_{s_K+1,K}\\
+ C \| f-f^{h,i-1} \|^2_{Q_T}
\]
for $ \ell = 1, \ldots , M $. Plugging these inequalities into \eqref{ineq}
%and using the assumption $\|W\|_{H^{k+1}(I \times \Omega_x)} \leq C $,
will give the stated error estimate and the proof is complete. 
\end{proof}
\begin{remark}
Theorem \ref{maxbigthe} and Lemma \ref{konlem}, with properly chosen $C$
and the bound on $p_Kh_K$, with the definition of $\Mnorm{\cdot}$
imply the following $L_2$-norm error estimate
\begin{multline} \label{maxl2}
\|W-W^{h,i}\|^2_{I\times\Omega_x} \leq 
C\!\!\sumK h_K^{2s_K+1}p_K^{-1}\Phi_\mathrm{M}(p_K,s_K)\|W\|^2_{s_K+1,K}\\
+ Cph\|f-f^{h,i-1}\|^2_{Q_T}.
\end{multline}
\end{remark}

\subsection{Vlasov-Maxwell Equations}

\begin{lemma} [V-coercivity] \label{Vstab}
We have that
\[
\B ( G(f^{h,i-1} ); g, g) = \Vnorm{g}^2 \quad\forall g \in \mathcal{H}_0.
\]
\end{lemma}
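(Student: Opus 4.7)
The plan is to set $f=g$ in the trilinear form $\B(G(f^{h,i-1});\cdot,\cdot)$ (using the local, element-wise definition) and match each resulting piece against a term of $\Vnorm{g}^2$. The $\delta_K$-weighted contribution appears at once from the diagonal product $(\d_t g+G\cdot\nabla g,\,\delta_K(\d_t g+G\cdot\nabla g))_K$, so the real work reduces to verifying
\[
\sumKV (\d_t g + G\cdot\nabla g, g)_K + \sum_{m=1}^{M-1}\sprod{[g]}{g_+}_m + |g_+|^2_0 = \tfrac12\Big(|g_+|^2_0 + |g_-|^2_M + \sum_{m=1}^{M-1}|[g]|^2_m\Big).
\]

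The key step, and the one I expect to be the main obstacle, is showing that the whole streaming sum $\sumKV (G(f^{h,i-1})\cdot\nabla g, g)_K$ vanishes. Using $(G\cdot\nabla g)\,g=\tfrac12 G\cdot\nabla(g^2)$ and integrating by parts in the combined $(x,v)$-variables, all interior inter-element contributions cancel because $g$ is continuous in $(x,v)$ across element interfaces, the outer-boundary contribution vanishes because $g\in\mathcal{H}_0$ satisfies homogeneous Dirichlet data on $\partial\Omega$, and the remaining interior term $-\tfrac12(g^2,\nabla\cdot G(f^{h,i-1}))$ is zero by the divergence-free identity $\nabla G(f)=0$ proved immediately after \eqref{vlasov}. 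This plays exactly the role of \eqref{lemma4_1} in the Maxwell coercivity proof; the nonlinear factor $\hat v\times B$ is harmless precisely because $\nabla_v(\hat v\times B)=0$.

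What remains is slab-by-slab integration by parts in $t$: on each $I_m\times\Omega$ one has $(\d_t g,g)_m=\tfrac12(|g_-|^2_{m+1}-|g_+|^2_m)$, whose telescoping sum gives $\tfrac12|g_-|^2_M-\tfrac12|g_+|^2_0+\tfrac12\sum_{m=1}^{M-1}(|g_-|^2_m-|g_+|^2_m)$. I would then rewrite each jump pairing through the elementary identity $\sprod{[g]}{g_+}_m=\tfrac12|[g]|^2_m+\tfrac12(|g_+|^2_m-|g_-|^2_m)$; the cross terms here cancel precisely with the telescoped piece above, while the initial term $|g_+|^2_0$ combines with $-\tfrac12|g_+|^2_0$ to produce $\tfrac12|g_+|^2_0$. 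Collecting everything reassembles exactly $\Vnorm{g}^2$. All of this last paragraph is routine bookkeeping that parallels Lemma \ref{norm} verbatim, so the only genuinely nontrivial content is the divergence-free cancellation of the streaming term.
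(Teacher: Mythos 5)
Your proposal is correct and follows essentially the same route as the paper, which proves the lemma by invoking $\nabla G(f^{h,i-1})=0$, the vanishing of $g$ on $\partial\Omega$, and the telescoping argument of Lemma \ref{norm}. Your divergence-free cancellation of the streaming term is exactly the Vlasov analogue of \eqref{lemma4_1}, and the time-slab bookkeeping matches the paper's verbatim.
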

\begin{proof}
Taking into account that $\nabla G(f^{h,i-1}) = 0$, $g$ is zero on 
$\d\Omega$ and
following the proof of Lemma \ref{norm} we get the desired result.
\end{proof}
\begin{lemma} [Poincar\'e-type V-estimate] \label{V2normest}
For any constant $ C $ we have for $ g \in \mathcal{H}_0 $,
\[
\|g\|^2_m \leq\Big( |g_-|^2_{m+1}
+\frac{1}{C}\|\d_t g+G(f^{h,i-1})\cdot\nabla g\|^2_m
 \Big) he^{Ch} .
\]
\end{lemma}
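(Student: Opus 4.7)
The plan is to mirror the proof of Lemma \ref{konlem} almost verbatim, with the linear streaming operator $\sum_{l=1}^{3} M_l \d_l$ replaced by the nonlinear convective operator $G(f^{h,i-1})\cdot\nabla$. The key algebraic identity to establish first is the skew-symmetry
\[
\bigl(G(f^{h,i-1})\cdot\nabla g,\, g\bigr)_{\Omega} = 0,
\]
which follows from integration by parts, the compact support of $g$ on $\d\Omega = \d(\Omega_x\times\Omega_v)$ (so boundary terms vanish), and the divergence-free property $\nabla \cdot G(f^{h,i-1}) = 0$ that has already been verified in Section~2. This plays exactly the role that \eqref{lemma4_1} played in the Maxwell case.

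With that identity in hand, I would fix $t \in (t_m, t_{m+1})$ and write
\[
\|g(t)\|^2_{\Omega}
= |g_-|^2_{m+1} - \int_t^{t_{m+1}} \frac{d}{ds}\|g(s)\|^2_{\Omega}\,ds
= |g_-|^2_{m+1} - 2\int_t^{t_{m+1}} \bigl(\d_s g + G(f^{h,i-1})\cdot\nabla g,\, g\bigr)_{\Omega}\,ds,
\]
where the second equality uses the skew-symmetry above to insert the convective term for free. A Cauchy--Schwarz followed by Young's inequality with parameter $C$ yields
\[
\|g(t)\|^2_{\Omega} \le |g_-|^2_{m+1} + \frac{1}{C}\|\d_t g + G(f^{h,i-1})\cdot\nabla g\|^2_m + C\int_t^{t_{m+1}} \|g(s)\|^2_{\Omega}\,ds.
\]

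From here the classical (continuous) Gr\"onwall lemma applied on the interval $(t,t_{m+1})$ of length at most $h$ gives the pointwise bound
\[
\|g(t)\|^2_{\Omega} \le \Bigl(|g_-|^2_{m+1} + \frac{1}{C}\|\d_t g + G(f^{h,i-1})\cdot\nabla g\|^2_m\Bigr) e^{Ch},
\]
and integrating this bound in $t$ over $I_m$ produces the desired extra factor of $h$ on the right-hand side and the norm $\|g\|_m^2$ on the left, completing the proof.

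The only real obstacle, which is mild, is the skew-symmetry step: it must be checked carefully that the iterate $f^{h,i-1}$ produces a coefficient $G(f^{h,i-1})$ that is still divergence-free in the $(x,v)$ variables, so that no spurious lower-order term $(\nabla\cdot G)\,g^2/2$ appears. This is immediate here because the divergence-free calculation performed in Section~2 is an algebraic identity in $v$ alone (it uses $\nabla_v\cdot(\hat v\times B)=0$) and does not rely on the specific $f$ entering the definition of $E$ and $B$; hence it holds equally well for the iterate.
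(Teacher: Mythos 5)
Your proof is correct and is exactly the argument the paper intends: the paper omits the proof, stating only that it is ``similar to that of Lemma \ref{konlem},'' and your version carries out precisely that adaptation, with the key observation being that the skew-symmetry $\bigl(G(f^{h,i-1})\cdot\nabla g,\,g\bigr)_{\Omega}=0$ (from $\nabla\cdot G=0$ and the compact support of $g$) replaces the identity \eqref{lemma4_1}. Your remark that the divergence-free property is an algebraic identity independent of which iterate defines the fields is the right justification and matches the computation in Section~2.
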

The proof is similar to that of Lemma \ref{konlem} and therefore is 
omitted.

Now we proceed with the error analysis.
First we let $ \tilde{f} $ be an interpolant of $ f $.
Then we set
\[
e= f-f^{h,i} = (f-\tilde{f}) - (f^{h,i} - \tilde{f}) = \eta- \xi.
\]
We state the following convergence theorem.
\begin{theorem} \label{bigthevla}
Let $f^{h,i}$ be a solution to \eqref{sdv2} and assume that the exact solution $f$
of \eqref{vlasov} is in the Sobolev class $ H^{k+1}(Q_T)$ and satisfies the bound
\begin{equation}\label{Vinfnorm}
\| \nabla f \|_\infty +  \| G(f) \|_\infty + \| \nabla \eta \|_\infty \leq C,
\end{equation}
and the parameter $\delta_K$ on each $K$ satisfies $\delta_K=C_1\frac{h_K}{p_K}$
for some positive constant $C_1$ with $p_Kh_K\leq C_2<1$ for some constant $C_2>0$.
Then there exists a constant $C>0$ independent of $p_K$, $s_K$ and $h_K$ such that
\begin{multline}\label{errest1}
\Vnorm{f-f^{h,i}}^2\leq 
C\bigg(\sumK h_K^{2s_K+1}p_K^{-1}\Phi_\mathrm{M}(p_K,s_K)\|W\|^2_{s_K+1,K}
+ph\|f-f^{h,i-1}\|^2_{Q_T}\\
+\!\!\sumKV h_K^{2s_K+1}p_K^{-1}\Phi_\mathrm{V}(p_K,s_K)\|f\|^2_{s_K+1,K}\bigg),
\end{multline}
where $0\leq s_K\leq\min(p_K,k)$ and the subscript $V$ in the triple norm above, 
as well as a subscript for $\Phi$ is to emphasize that
these quantities are in the Vlasov part. 
Here, $ \Phi_\mathrm{V} = \max(\Phi_1, \Phi_2) $ with
$ \mathcal{N} = \dim \Omega_x + \dim \Omega_v + 1 $ for $ \Phi_1 $ and $ \Phi_2 $.
\end{theorem}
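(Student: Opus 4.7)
The plan is to parallel the proof of Theorem \ref{maxbigthe}, with the key new ingredient being a \emph{perturbed} Galerkin orthogonality that absorbs the error in the lagged drift $G(f^{h,i-1})$ and couples the analysis to the Maxwell $L_2$-estimate \eqref{maxl2}. I would begin by splitting $e=\eta-\xi$ with $\tilde f\in V_h$ an interpolant of $f$, $\eta=f-\tilde f$ and $\xi=f^{h,i}-\tilde f\in V_h$. V-coercivity (Lemma \ref{Vstab}) gives $\Vnorm{\xi}^2=\B(G(f^{h,i-1});\xi,\xi)$. Since the exact solution satisfies $\B(G(f);f,g)=\L(g)$ while \eqref{sdv2} gives $\B(G(f^{h,i-1});f^{h,i},g)=\L(g)$, subtracting and using bilinearity in the second slot yields the identity
\begin{equation*}
\B(G(f^{h,i-1});\xi,g)=\B(G(f^{h,i-1});\eta,g)+\!\!\sumKV\!\!\bigl((G(f)-G(f^{h,i-1}))\cdot\nabla f,\, g+\delta_K(\d_t g+G(f^{h,i-1})\cdot\nabla g)\bigr)_K
\end{equation*}
for every $g\in V_h$. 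Choosing $g=\xi$ splits $\Vnorm{\xi}^2$ into two contributions, call them $I_1$ and $I_2$.

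For $I_1=\B(G(f^{h,i-1});\eta,\xi)$ I would proceed exactly as in the Maxwell proof: integrate by parts in $t$ and in $(x,v)$, using that $G(f^{h,i-1})$ is divergence free with respect to the total gradient and that $\xi$ vanishes on $\d\Omega$, and then apply Cauchy-Schwarz with $\varepsilon$-weights to absorb a fraction of $\Vnorm{\xi}^2$ into the left. The bound \eqref{Vinfnorm} supplies the $L_\infty$ control of $G(f^{h,i-1})$ and $\nabla\eta$ needed here. The resulting interpolation residuals are estimated by \eqref{etabound1AS}--\eqref{etabound2AS} combined with a trace/inverse inequality of the type \eqref{est_eta_dK}, producing exactly the Vlasov term $\sumKV h_K^{2s_K+1}p_K^{-1}\Phi_\mathrm{V}(p_K,s_K)\|f\|_{s_K+1,K}^2$ of \eqref{errest1}. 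For $I_2$ I would note that $G(f)-G(f^{h,i-1})=(0,(E-E^{h,i})+\hat v\times(B-B^{h,i}))$ is pointwise bounded by $|W-W^{h,i}|$; Cauchy-Schwarz together with $\|\nabla f\|_\infty\le C$ from \eqref{Vinfnorm}, followed by absorbing the $\delta_K^{1/2}(\d_t\xi+G(f^{h,i-1})\cdot\nabla\xi)$ factor into the V-norm, yields $|I_2|\le\varepsilon\Vnorm{\xi}^2+C\|W-W^{h,i}\|_{I\times\Omega_x}^2$. The Maxwell $L_2$-estimate \eqref{maxl2} then inserts precisely the first line of \eqref{errest1} together with the coupling term $Cph\|f-f^{h,i-1}\|^2_{Q_T}$.

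Combining these bounds with $\Vnorm{e}^2\le 2\Vnorm{\eta}^2+2\Vnorm{\xi}^2$, the interpolation estimate for $\Vnorm{\eta}^2$ (via the same trace plus \eqref{etabound1AS}--\eqref{etabound2AS} machinery used in the Maxwell proof), and the Poincar\'e-type bound of Lemma \ref{V2normest} to handle the slab $L_2$-norms $\|e\|_m$ that appear in the Cauchy-Schwarz steps, gives an inequality of the form
\begin{equation*}
\Vnorm{e}^2\le C(\textrm{Maxwell terms})+C(\textrm{Vlasov interpolation terms})+Cph\|f-f^{h,i-1}\|^2_{Q_T}+C\sum_{m=1}^M h|e_-|_m^2,
\end{equation*}
and a final application of the discrete Gr\"onwall lemma used in Theorem \ref{maxbigthe} eliminates the last sum to yield \eqref{errest1}. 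The main obstacle is the perturbation $I_2$: one cannot afford to use the full triple-norm bound of Theorem \ref{maxbigthe} for $W-W^{h,i}$, because its $\delta_K$-weighted streaming contributions would then be absorbed twice; the $L_2$ estimate \eqref{maxl2} has to be invoked at just the right step, and the cancellation of $\delta_K\sim h_K/p_K$ against $p_Kh_K\le C_2$ must be tracked carefully in order to match the advertised $h_K^{2s_K+1}/p_K$ order in the Vlasov interpolation contribution.
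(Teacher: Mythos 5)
Your proposal is correct and follows essentially the same route as the paper: the "perturbed Galerkin orthogonality" you write down is exactly the paper's splitting $\Vnorm{\xi}^2=T_1+T_2$ with $T_1=\B(G(f^{h,i-1});\eta,\xi)$ and $T_2=\B(G(f);f,\xi)-\B(G(f^{h,i-1});f,\xi)$, and you handle $T_2$ the same way, via $G(f)-G(f^{h,i-1})=(0,E-E^{h,i}+\hat v\times(B-B^{h,i}))$ and the Maxwell $L_2$ bound \eqref{maxl2}. The remaining steps (interpolation estimates for the $\eta$-terms, Lemma \ref{V2normest} for the slab norms, and the discrete Gr\"onwall lemma) also coincide with the paper's argument.
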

\begin{proof}
By \eqref{sdv2} and Lemma \ref{Vstab} we get that
\[
\Vnorm{\xi}^2=\B(G(f^{h,i-1});\xi,\xi)=\L(\xi)-\B(G(f^{h,i-1});\tilde{f},\xi)=T_1+T_2,
\]
where
\[
T_1 = \B (G(f^{h,i-1}); \eta, \xi)
\]
and
\[
T_2 = \B (G(f); f, \xi) - \B (G(f^{h,i-1}); f, \xi).
\]
We start with the term $T_1$.
Integrating by parts and using the facts that $\eta$ and $\xi$ are zero on $\d\Omega$
and $\nabla G(f^{h,i-1})=0$, we get
\[
\begin{split}
T_1&=\sumKV
\big(\d_t\eta+G(f^{h,i-1})\cdot\nabla\eta,\xi+\delta_K(\d_t\xi+G(f^{h,i-1})\cdot\nabla\xi)\big)_K\\
&\qquad+\sum_{m=1}^{M-1}\sprod{[ \eta ]}{\xi_+}_m + \sprod{\eta_+}{\xi_+}_0\\
&=-(\eta,\d_t\xi+G(f^{h,i-1})\cdot\nabla\xi)_{Q_T}+\sprod{\eta_-}{\xi_-}_M
-\sum_{m=1}^{M-1}\sprod{\eta_-}{[\xi]}_m\\
&\qquad
+\sumKV\delta_K\big(\d_t\eta+G(f^{h,i-1})\cdot\nabla\eta,\d_t\xi+G(f^{h,i-1})\cdot\nabla\xi\big)_K.
\end{split}
\]
Now using Cauchy-Schwarz inequality we obtain the estimate
\begin{equation*}
|T_1|\leq\frac{1}{8}\Vnorm{\xi}^2+C\Big(\sum_{m=1}^M|\eta_-|_m^2+\delta_K^{-1}\|\eta\|_K^2+
\sumKV\big(\delta_K\|\d_t\eta+G(f^{h,i-1})\cdot\nabla\eta\|_K^2\big)\Big),
\end{equation*}
where for the last term we have the bound
\begin{multline}\label{eta_bound}
\|\d_t\eta+G(f^{h,i-1})\cdot\nabla\eta\|_K\leq\\
\leq\|\d_t\eta\|_K+\|G(f)\|_\infty\|\nabla\eta\|_K+\|\nabla\eta\|_\infty\|G(f^{h,i-1})-G(f)\|_K.
\end{multline}
Next we estimate $T_2$:
\begin{equation*}
\begin{split}
|T_2|&\leq\sumKV\delta_K\big|\big((G(f)-G(f^{h,i-1}))\cdot\nabla f,
\d_t\xi+G(f^{h,i-1})\cdot\nabla\xi\big)_K\big|\\
&\qquad+\big|\big((G(f)-G(f^{h,i-1}))\cdot\nabla f,\xi\big)_{Q_T}\big|\\
&\leq C\Big(\delta_K\|G(f)-G(f^{h,i-1})\|_{Q_T}^2\|\nabla f\|_\infty^2
+\!\!\sumKV\frac{\delta_K}{8}\|\d_t\xi+G(f^{h,i-1})\cdot\nabla\xi\|_K^2\Big)\\
&\qquad+
C\|G(f)-G(f^{h,i-1})\|_{Q_T}\|\nabla f\|_\infty\|\xi\|_{Q_T}.
\end{split}
\end{equation*}
To proceed we need to estimate $\|G(f)-G(f^{h,i-1})\|_{Q_T}$.
By the definition of $G$ we have that
\[
G(f)-G(f^{h,i-1})=(0, E-E^{h,i}+\hat{v}\times(B-B^{h,i})),
\]
which gives
\[
\|G(f)-G(f^{h,i-1})\|_{Q_T}^2\leq C(\|E-E^{h,i}\|_{Q_T}^2+\|\hat{v}\times(B-B^{h,i})\|_{Q_T}^2).
\]
Hence using \eqref{maxl2} we obtain
\begin{equation}\label{G_bound}
\|G(f)-G(f^{h,i-1})\|_{Q_T}^2\leq 
C\!\!\sumK h_K^{2s_K+1}\Psi_\mathrm{M}(p_K,s_K)+Cph\|f-f^{h,i-1}\|^2_{Q_T},
\end{equation}
where we denote
\[
\Psi_\mathrm{M}(p_K,s_K):=p_K^{-1}\Phi_\mathrm{M}(p_K,s_K)\|W\|^2_{s_K+1,K}.
\]
Now combining the estimates for $T_1$ and $T_2$ together with \eqref{eta_bound},
\eqref{G_bound} and \eqref{Vinfnorm} we have
\begin{multline*}
\Vnorm{\xi}^2\leq\frac{1}{4}\Vnorm{\xi}^2+C\|\xi\|_{Q_T}^2
+C\!\!\sumK h_K^{2s_K+1}\Psi_\mathrm{M}(p_K,s_K)+Cph\|f-f^{h,i-1}\|^2_{Q_T}\\
+C\!\sum_{m=1}^M|\eta_-|^2_m+C\!\!\sumK\Big(\delta_K^{-1}\|\eta\|^2_K
+\delta_K\big(\|\d_t\eta\|_K^2+\|\nabla\eta\|_K^2\big)\Big).
\end{multline*}
Moving the triple norm to the left hand side
and estimating $\|\xi\|^2_{Q_T}\leq C(\|e\|^2_{Q_T}+\|\eta\|^2_{Q_T})$,
will give us the following inequality
\begin{multline*}
\Vnorm{\xi}^2\leq C\|e\|_{Q_T}^2+C\|\eta\|_{Q_T}^2
+C\!\!\sumK h_K^{2s_K+1}\Psi_\mathrm{M}(p_K,s_K)+Cph\|f-f^{h,i-1}\|^2_{Q_T}\\
+C\!\sum_{m=1}^M|\eta_-|^2_m+C\!\!\sumK\Big(\delta_K^{-1}\|\eta\|^2_K
+\delta_K\big(\|\d_t\eta\|_K^2+\|\nabla\eta\|_K^2\big)\Big).
\end{multline*}
We now estimate $\Vnorm{e}$ as follows
\begin{multline*}
\Vnorm{e}^2\leq 2\Vnorm{\xi}^2+2\Vnorm{\eta}^2\leq\\
\leq C\|e\|_{Q_T}^2+C\|\eta\|_{Q_T}^2
+C\!\!\sumK h_K^{2s_K+1}\Psi_\mathrm{M}(p_K,s_K)+Cph\|f-f^{h,i-1}\|^2_{Q_T}\\
+C\!\sum_{m=1}^M|\eta_-|^2_m+C\!\!\sumK\Big(\delta_K^{-1}\|\eta\|^2_K
+\delta_K\big(\|\d_t\eta\|_K^2+\|\nabla\eta\|_K^2\big)\Big)
+|\eta_+|^2_0+\sum_{m=1}^{M-1}|[\eta]|^2_m.
\end{multline*}
Using Lemma \ref{V2normest} for the term $\|e\|^2_{Q_T}$
with an appropriately chosen constant $C$ and the bound on $p_Kh_K$ we get
\begin{multline*}
\Vnorm{e}^2\leq \frac{1}{2}\Vnorm{e}^2+Ch\!\sum_{m=1}^M|e_-|^2_m
+C\!\!\sumK h_K^{2s_K+1}\Psi_\mathrm{M}(p_K,s_K)+Cph\|f-f^{h,i-1}\|^2_{Q_T}\\
+C\|\eta\|_{Q_T}^2+C\!\sum_{m=1}^M|\eta_-|^2_m+|\eta_+|^2_0+\sum_{m=1}^{M-1}|[\eta]|^2_m\\
+C\!\!\sumK\Big(\delta_K^{-1}\|\eta\|^2_K
+\delta_K\big(\|\d_t\eta\|_K^2+\|\nabla\eta\|_K^2\big)\Big).
\end{multline*}
Recalling \eqref{etabound1AS} and \eqref{etabound2AS} for all 
$\eta$-terms we obtain
(in a similar way as in the proof of Theorem \ref{maxbigthe} 
for the terms $J_1$ and $J_2$)
\begin{multline*}
\Vnorm{e}^2\leq Ch\!\sum_{m=1}^M|e_-|^2_m
+C\!\!\sumK h_K^{2s_K+1}\Psi_\mathrm{M}(p_K,s_K)+Cph\|f-f^{h,i-1}\|^2_{Q_T}\\
+C\!\!\sumKV h_K^{2s_K+1}p_K^{-1}\Phi_\mathrm{V}(p_K,s_K)\|f\|^2_{s_K+1,K}.
\end{multline*}
Now we use the discrete Gr\"{o}nwall lemma stated in the proof of 
Theorem \ref{maxbigthe}
and we get the following inequality
\begin{multline*}
\Vnorm{e}^2\leq 
C\!\!\sumK h_K^{2s_K+1}p_K^{-1}\Phi_\mathrm{M}(p_K,s_K)\|W\|^2_{s_K+1,K}
+Cph\|f-f^{h,i-1}\|^2_{Q_T}\\
+C\!\!\sumKV h_K^{2s_K+1}p_K^{-1}\Phi_\mathrm{V}(p_K,s_K)\|f\|^2_{s_K+1,K},
\end{multline*}
which gives the desired result and completes the proof. 
\end{proof}

\begin{corollary}
Under the assumptions of Theorem \ref{bigthevla} we have
\begin{multline*}
\Vnorm{f-f^{h,i}}^2\leq 
C\!\!\sumK h_K^{2s_K+1}p_K^{-1}\Phi_\mathrm{M}(p_K,s_K)\|W\|^2_{s_K+1,K}
+C\big(ph\big)^i\\
+C\!\!\sumKV h_K^{2s_K+1}p_K^{-1}\Phi_\mathrm{V}(p_K,s_K)\|f\|^2_{s_K+1,K}.
\end{multline*}
\end{corollary}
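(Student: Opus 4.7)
The plan is to iterate Theorem~\ref{bigthevla} as a one-step recursion in the iteration index $i$ and to exploit the smallness assumption $p h \le C_2 < 1$. Setting $e^i := f - f^{h,i}$ and letting
\[
A := C\!\sumK h_K^{2s_K+1}p_K^{-1}\Phi_\mathrm{M}(p_K,s_K)\|W\|^2_{s_K+1,K} + C\!\!\sumKV h_K^{2s_K+1}p_K^{-1}\Phi_\mathrm{V}(p_K,s_K)\|f\|^2_{s_K+1,K}
\]
collect the two \emph{static} terms on the right-hand side of \eqref{errest1}, the conclusion of Theorem~\ref{bigthevla} takes the compact form
\[
\Vnorm{e^i}^2 \;\leq\; A + C p h \, \|e^{i-1}\|^2_{Q_T}, \qquad i=1,2,\ldots.
\]

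The crucial intermediate step is to absorb the $L^2(Q_T)$-norm on the right into $\Vnorm{e^{i-1}}^2$, so that the recursion closes in the triple norm. For this I would apply Lemma~\ref{V2normest} slab by slab with the free constant in that lemma chosen proportional to $p$, which weights the streaming term by $1/p$. Combined with $\delta_K = C_1 h_K/p_K$, this is exactly the scaling needed to convert the unweighted contributions $\|\d_t g + G\cdot\nabla g\|^2_K$ into the $\delta_K$-weighted ones appearing in $\Vnorm{g}^2$ without producing spurious factors of $h^{-1}$; the Gr\"onwall factor $e^{Cph}$ stays bounded since $ph \le C_2$. The interior traces $|e^{i-1}_-|^2_{m+1}$ are then treated by a telescoping argument through the jumps, using $|g_+|^2_m \le (1+\alpha)|g_-|^2_m + (1+\alpha^{-1})|[g]|^2_m$ with $\alpha = \mathcal O(h)$ so that the multiplicative factors across slabs accumulate to at most $e^{CT}$. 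This yields the bound
\[
\|e^{i-1}\|^2_{Q_T} \;\leq\; C^\ast \, \Vnorm{e^{i-1}}^2
\]
with $C^\ast$ independent of $h$, $p$ and $i$.

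Substituting into the one-step bound produces the closed recursion $\Vnorm{e^i}^2 \le A + \tilde C\, ph\, \Vnorm{e^{i-1}}^2$ with $\tilde C := CC^\ast$. Iterating $i$ times gives
\[
\Vnorm{e^i}^2 \leq A\sum_{j=0}^{i-1}(\tilde C ph)^j + (\tilde C ph)^i \, \Vnorm{e^0}^2.
\]
Shrinking $C_2$ so that $\tilde C C_2 < \tfrac12$, the geometric sum is bounded by $2$ independently of $i$, while $\Vnorm{e^0}^2$ depends only on $f$ and the initial guess $f^{h,0}$ and is therefore a constant; hence the first term is absorbed into $\bar C A$ and the second into $C(ph)^i$, reproducing the claim. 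The hardest part of this program is the intermediate estimate $\|e^{i-1}\|^2_{Q_T} \le C^\ast \Vnorm{e^{i-1}}^2$: the triple norm weights the streaming derivative only by $\delta_K = C_1 h_K/p_K$ and controls temporal traces only at $t=0$, $t=T$, and across interior jumps, so both the $p$-proportional choice of the free constant in Lemma~\ref{V2normest} and the telescoping across interior traces must be tuned carefully to avoid any blow-up in $h^{-1}$ or $p$.
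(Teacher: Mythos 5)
Your overall architecture (one-step recursion in $i$, iteration, geometric series controlled by $p_Kh_K\le C_2<1$, base term absorbed into $C(ph)^i$) is the same as the paper's. The genuine gap is the intermediate inequality you yourself flag as the hardest part, namely $\|e^{i-1}\|^2_{Q_T}\le C^\ast\Vnorm{e^{i-1}}^2$ with $C^\ast$ independent of $h$ and $p$: this is false, and no tuning of the free constant in Lemma \ref{V2normest} or of the telescoping parameter $\alpha$ can rescue it. The triple norm controls only the traces at $t=0$ and $t=T$, the interior \emph{jumps}, and the streaming derivative weighted by $\delta_K\sim h_K/p_K$; it does not control the interior traces $|g_-|^2_m$ or the unweighted $L_2(Q_T)$ norm. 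Concretely, take $g(t,x,v)=\phi(t)\psi(x,v)$ smooth and continuous in time with $\phi$ supported away from $t=0$ and $t=T$. Then all jump and endpoint terms vanish and $\Vnorm{g}^2=\sumKV\delta_K\|\d_t g+G\cdot\nabla g\|^2_K=\mathcal{O}(h/p)\,\|g\|^2_{H^1(Q_T)}$, while $\|g\|^2_{Q_T}=\mathcal{O}(1)$, so the ratio $\|g\|^2_{Q_T}/\Vnorm{g}^2$ blows up like $p/h$. Your telescoping reflects this: with $\alpha=\mathcal{O}(h)$ the jump contributions enter with weight $(1+\alpha^{-1})=\mathcal{O}(h^{-1})$ relative to the weight $1$ they carry in $\Vnorm{\cdot}^2$, and the streaming contributions enter unweighted versus the weight $\delta_K$ in the triple norm, so the best one can extract is $\|g\|^2_{Q_T}\le C(h^{-1}+p/h)\Vnorm{g}^2$, which destroys the factor $ph$ in the recursion.

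The fix — and the route the paper takes — is to close the recursion in the $L_2(Q_T)$ norm rather than in the triple norm. Inside the proof of Theorem \ref{bigthevla} the discrete Gr\"onwall step already yields $|e^i_-|^2_\ell\le S+Cph\|f-f^{h,i-1}\|^2_{Q_T}$ uniformly in $\ell$ (with $S$ your quantity $A$); summing Lemma \ref{V2normest} over the slabs with the free constant proportional to $p$ then gives $\|e^i\|^2_{Q_T}\le he^{Ch}\sum_m|e^i_-|^2_{m+1}+C\sumKV\delta_K\|\d_t e^i+G\cdot\nabla e^i\|^2_K\le C\big(S+ph\|e^{i-1}\|^2_{Q_T}\big)$, since $h\sum_m 1\le T$ and the streaming sum is part of $\Vnorm{e^i}^2$. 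This recursion in $\|\cdot\|^2_{Q_T}$ iterates exactly as in your final paragraph to give $\|e^{i-1}\|^2_{Q_T}\le CS+C(ph)^{i-1}$, and substituting this back into the statement of Theorem \ref{bigthevla} produces the claimed bound on $\Vnorm{f-f^{h,i}}^2$. So your iteration and absorption of the geometric series are fine; only the norm in which the recursion closes must be changed.
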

\begin{proof}
Using Lemma \ref{V2normest} with a properly chosen constant $C$
and the bound on $p_Kh_K$ we get
\[
\|f-f^{h,i}\|^2_{s_K+1,K}\leq S+Cph\|f-f^{h,i-1}\|^2_{s_K+1,K},
\]
where $S$ denotes the terms with sums from 
the right hand side of \eqref{errest1}.
Using this inequality repeatedly will give us
\[
\|f-f^{h,i}\|^2_{s_K+1,K}\leq CS+C(ph)^i,
\]
which ends the proof.
\end{proof}

\begin{remark}[The DG approach]
The whole theory developed in the previous sections 
work for the streamline diffusion based discontinuous Galerkin (SDDG) 
method as well. 
In this method our approximations are also allowed to have jump 
discontinuities  
across the inter-element boundaries. Then the norms, including 
the sum over such jump terms, are much more involved. 
However, as we mentioned above, the analysis although lengthy 
follow the same path. 
\end{remark}

\section{Nitsche's method for Maxwell equations}

Alternative, yet more desirable numerical scheme for
the Maxwell's equations can be obtained using a symmetrizing
penalty approach known as Nitsche's method.
This, however cannot be extended to the Vlasov part due 
to the hyperbolic nature of the Vlasov equation.
Nevertheless, the advantages of the symmetrizing are
overwhelming.
Therefore below we include analysis of the Nitsche's 
approach for the Maxwell part.

Recall that we have the Maxwell's equations given by
\begin{equation*} %\label{Maxwell}
\begin{aligned}
& E_{t} - \nabla_x \times B = - j, \\
& B_{t} + \nabla_x \times E = 0.
\end{aligned}
\end{equation*}
We differentiate the first equation with respect to time to get
\begin{equation}\label{MaxwellA}
E_{tt} - \nabla_x \times B_t = -j_t,
\end{equation}
and plug the second Maxwell's equation into the equation \eqref{MaxwellA} 
to obtain 
\begin{equation}\label{MaxwellB}
E_{tt} + \nabla_x \times (\nabla_x \times E ) = -j_t.
\end{equation}
Multiplying \eqref{MaxwellB} with 
$ g \in H(\textnormal{curl}, \Omega_x) = 
\{ v \in L_2(\Omega_x): \curl v \in L_2(\Omega_x ) \} $
and integrating over $ \Omega_x $ yields 
\begin{equation} \label{NM1}
\int_{\Omega_x} E_{tt} \cdot g \, dx +
\int_{\Omega_x} \gradx \times 
(\gradx \times E) \cdot g \, dx = -\int_{\Omega_x} j_t \cdot g \, dx.
\end{equation}
Now recall the Green's formula
\[
\int_{\Omega_x} u \cdot \curl v \, dx - 
\int_{\Omega_x} \curl u \cdot v \, dx = 
- \int_{\Gamma_x} (u \times n) \cdot v \, ds,
\]
where $ n=n(x) $ is the unit outward normal to the boundary at the point 
$x\in \partial\Omega_x$. 
Apply Green's formula to \eqref{NM1}
\[
\int_{\Omega_x} E_{tt} \cdot g \, dx + 
\int_{\Omega_x} \curl E \cdot \curl g \, dx
- \int_{\Gamma_x} \curl E \cdot (g \times n) \, ds = 
-\int_{\Omega_x} j_t \cdot g \, dx.
\]
This relation being non-symmetric 
(see the contribution from the boundary terms) 
causes severe restrictions in, e.g., deriving stability estimates. 
To circumvent such draw-backs Nitsche introduced a symmetrized 
scheme for elliptic and parabolic problems (see \cite{Nitsche}), which is also known as 
the {\sl penalty method}. This can be seen in, e.g., \cite{Burman_Hansbo} and \cite{Sticko_Kreiss}. 
In our case Nitsche's method is performed 
by the add of extra boundary terms making the bilinear form
symmetric and coercive, viz. 
\begin{equation*}
\begin{aligned}
\int_{\Omega_x} E_{tt} \cdot g \, dx 
+ \int_{\Omega_x} \curl E \cdot \curl g \, dx
- \int_{\Gamma_x} \curl E \cdot (g \times n) \, ds \\
- \int_{\Gamma_x} (E \times n) \cdot \curl g \, ds 
+ \frac{\gamma}{h} \int_{\Gamma_x} E \cdot g \, ds
= -\int_{\Omega_x} j_t \cdot g \, dx.
\end{aligned}
\end{equation*}
Here $ \gamma $ is a constant that will be specified later.
Now, we define the symmetric bilinear form
\[
\begin{aligned}
a(E, g) :=& \int_{\Omega_x} \curl E \cdot \curl g \, dx- 
\int_{\Gamma_x} \curl E \cdot (g \times n) \, ds \\
&- \int_{\Gamma_x} (E \times n) \cdot \curl g \, ds 
+ \frac{\gamma}{h} \int_{\Gamma_x} E \cdot g \, ds
\end{aligned}
\]
and the element space of piecewise linear polynomials
\[
V^x_h=\{g\in H(\textnormal{curl}, \Omega_x): g|_{\tau_x} \in P_1(\tau_x ),
\, \forall \tau_x\in T^x_h\}.
\]
Thus, we can formulate the semi-discrete problem as: for each fixed $ t $,
find $ E^h (t, \cdot) \in V^x_h $, such that
\begin{equation} \label{NM2}
(E_{tt}^h, g)_{\Omega_x} + a(E^h, g) = 
-(j_t, g)_{\Omega_x} \quad \forall g \in V^x_h.
\end{equation}

It is straightforward to observe the consistency of the method.
\begin{lemma} [Consistency]\label{consisLemma}
The exact solution $E$ of \eqref{MaxwellB} satisfies
\[
(E_{tt}, g)_{\Omega_x} + a(E, g) = 
-(j_t, g)_{\Omega_x} \quad \forall g \in H(\textnormal{curl}, \Omega_x).
\]
\end{lemma}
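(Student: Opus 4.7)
The plan is to derive the identity by testing the second-order formulation \eqref{MaxwellB} against an arbitrary $g\in H(\textnormal{curl},\Omega_x)$ and then showing that the three boundary integrals appearing in the definition of $a(\cdot,\cdot)$ are either produced by integration by parts or vanish because of the boundary behavior of the exact solution.

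First I would multiply \eqref{MaxwellB} by $g$ and integrate over $\Omega_x$, which yields exactly \eqref{NM1}. Next I would apply the Green's formula stated in the excerpt with $u=\curl E$ and $v=g$, using the identity $(\curl E\times n)\cdot g=-\curl E\cdot(g\times n)$, to rewrite
\[
\int_{\Omega_x}\curl E\cdot \curl g\, dx-\int_{\Omega_x}\curl\curl E\cdot g\, dx=\int_{\Gamma_x}\curl E\cdot(g\times n)\, ds.
\]
Substituting this into \eqref{NM1} produces
\[
(E_{tt},g)_{\Omega_x}+\int_{\Omega_x}\curl E\cdot \curl g\, dx-\int_{\Gamma_x}\curl E\cdot(g\times n)\, ds=-(j_t,g)_{\Omega_x},
\]
which already contains the first two terms of $a(E,g)$.

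To match the full bilinear form $a(E,g)$, I would add the remaining two boundary integrals $-\int_{\Gamma_x}(E\times n)\cdot\curl g\, ds$ and $\tfrac{\gamma}{h}\int_{\Gamma_x}E\cdot g\, ds$ to both sides. The point is that, by the standing assumption from Section 3 that the field $E(t,\cdot)$ has compact support inside $\Omega_x$, its trace on $\Gamma_x$ vanishes, and therefore both $E\times n$ and $E$ itself are identically zero on $\Gamma_x$. Consequently both added boundary terms are zero, so adding them changes nothing, and the resulting left-hand side is precisely $(E_{tt},g)_{\Omega_x}+a(E,g)$.

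There is no serious obstacle: the calculation is a standard consistency argument for Nitsche-type schemes. The only point worth being careful about is the sign bookkeeping when converting between $(u\times n)\cdot v$, $(v\times n)\cdot u$, and $u\cdot(v\times n)$ via the scalar-triple-product identity, and the observation that the symmetrizing and penalty terms are consistent precisely because of the homogeneous boundary behavior of the exact solution $E$ assumed throughout the paper.
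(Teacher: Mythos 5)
Your argument is correct and coincides with what the paper does: the derivation of \eqref{NM1}, the application of the stated Green's formula producing the boundary term $-\int_{\Gamma_x}\curl E\cdot(g\times n)\,ds$, and the observation that the symmetrizing and penalty terms vanish for the exact solution because $E(t,\cdot)$ has vanishing trace on $\Gamma_x$ (compact support in $\Omega_x$, as assumed in Section 3) are precisely the steps the paper carries out in the text immediately preceding the lemma, which is why it labels the consistency as straightforward. Your sign bookkeeping via the scalar triple product is also consistent with the paper's displayed identity.
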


Now, defining the mesh dependent discrete norm
\[
\norm{g}^2_h := \norm{\curl g}^2_{\Omega_x} + \norm{h^{-1/2} g}^2_{\Gamma_x},
\]
we have
\[
\begin{aligned}
a(g, g) &= \norm{\curl g}^2_{\Omega_x} - 2(\curl g, g \times n)_{\Gamma_x}
+ \frac{\gamma}{h}\norm{g}^2_{\Gamma_x} \\
&\geq \norm{\curl g}^2_{\Omega_x} - 
2\norm{h^{1/2} \curl g}_{\Gamma_x} \norm{h^{-1/2} g \times n}_{\Gamma_x}
+ \gamma \norm{h^{-1/2} g}^2_{\Gamma_x} \\
&\geq \norm{\curl g}^2_{\Omega_x} - 
\frac{1}{\alpha}\norm{h^{1/2} \curl g}^2_{\Gamma_x}
- \alpha \norm{h^{-1/2} g \times n}^2_{\Gamma_x} 
+ \gamma \norm{h^{-1/2} g}^2_{\Gamma_x} \\
&\geq \frac{\alpha - \tilde{C}}{\alpha} \norm{\curl g}^2_{\Gamma_x} + 
(\gamma - 4\alpha) \norm{h^{-1/2} g}^2_{\Gamma_x},
\end{aligned}
\]
where in the last inequality we used the following trace estimate: 
\[
\norm{h^{1/2} \curl g}^2_{\Gamma_x} 
\leq \tilde{C} \norm{\curl g}^2_{\Omega_x} \quad \forall g \in V^x_h
\]
and the trivial inequality 
\begin{equation}\label{triv_ineq}
\norm{h^{-1/2} g \times n}^2_{\Gamma_x} 
\leq 4 \norm{h^{-1/2} g}^2_{\Gamma_x} \quad \forall g \in L_2 (\Gamma_x).
\end{equation}
Now, if we choose the constants $ \gamma $ and 
$ \alpha $, such that $ \gamma > 4 \alpha $ and $ \alpha > \tilde{C} $,
then we have proved the following coercivity result.
\begin{lemma} [Coercivity] \label{coercLemma}
If $ \gamma $ is large enough, then there exists a constant $ C > 0 $,
 such that
\[
a(g, g) \geq C \norm{g}^2_h \quad \forall g \in V^x_h.
\]
\end{lemma}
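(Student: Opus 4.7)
The proof is essentially assembled from the computation displayed immediately before the lemma. The plan is to view that chain of inequalities as a template, identify the two auxiliary ingredients it relies on (a discrete inverse-trace estimate and a trivial pointwise bound), and then show that a suitable choice of the penalty parameter $\gamma$ makes both coefficients on the right strictly positive.

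First I would start from the definition of $a(g,g)$, which reads
\[
a(g,g) = \norm{\curl g}^2_{\Omega_x} - 2(\curl g,\, g\times n)_{\Gamma_x} + \frac{\gamma}{h}\norm{g}^2_{\Gamma_x}.
\]
The only indefinite piece is the boundary cross-term. I would control it by Cauchy--Schwarz combined with a weighted Young inequality with parameter $\alpha>0$, rewriting the weight as $h^{1/2}$ paired with $h^{-1/2}$ so that the resulting two boundary norms are those already present in $\|g\|_h$ and in the penalty term. This yields
\[
-2(\curl g, g\times n)_{\Gamma_x} \geq -\tfrac{1}{\alpha}\norm{h^{1/2}\curl g}^2_{\Gamma_x} - \alpha\norm{h^{-1/2}\, g\times n}^2_{\Gamma_x}.
\]

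Next I would invoke the two auxiliary ingredients stated in the excerpt: the discrete trace/inverse estimate $\norm{h^{1/2}\curl g}^2_{\Gamma_x}\leq \tilde C\norm{\curl g}^2_{\Omega_x}$ valid on $V^x_h$ (this uses piecewise polynomiality, which is the reason the coercivity is restricted to the discrete space), and the pointwise bound \eqref{triv_ineq}. Substituting both, the inequality
\[
a(g,g)\geq \frac{\alpha-\tilde C}{\alpha}\norm{\curl g}^2_{\Omega_x} + (\gamma-4\alpha)\norm{h^{-1/2}g}^2_{\Gamma_x}
\]
follows. Then I would choose $\alpha$ just above $\tilde C$ (say $\alpha=2\tilde C$) and pick $\gamma>4\alpha$ (say $\gamma=8\tilde C+1$); both coefficients become strictly positive constants independent of $h$ and $g$. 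Taking $C$ to be their minimum yields $a(g,g)\geq C\,\norm{g}^2_h$ by the definition of the mesh-dependent norm.

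The only genuine obstacle is ensuring the trace estimate is applied legitimately, namely that $\tilde C$ is independent of $h$; this is the standard hp/inverse trace inequality for polynomials on shape-regular meshes and is the reason the result holds only on $V^x_h$ rather than on the full space $H(\textnormal{curl},\Omega_x)$. Everything else is bookkeeping of constants, and the final choice $\gamma>4\tilde C$ (with $\alpha$ chosen in the window $(\tilde C,\gamma/4)$) is what the phrase ``$\gamma$ large enough'' in the statement refers to.
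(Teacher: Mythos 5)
Your proposal is correct and follows essentially the same route as the paper: the paper's proof is precisely the displayed chain of inequalities preceding the lemma (Cauchy--Schwarz plus Young's inequality with parameter $\alpha$, the discrete trace estimate on $V^x_h$, and the bound \eqref{triv_ineq}), concluded by choosing $\alpha > \tilde{C}$ and $\gamma > 4\alpha$. Your version additionally makes explicit why the restriction to $V^x_h$ is needed and fixes a small typo in the paper's last display (the subscript on $\norm{\curl g}^2$ should be $\Omega_x$, not $\Gamma_x$), but the argument is the same.
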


%Since $ a(\cdot, \cdot) $ is not continuous with respect to $ \norm{g}_h $.
To have continuity of the form $a(\cdot,\cdot)$
we need to define a mesh dependent triple norm as 
\[
\Tnorm{g}^2_h := \norm{g}^2_h + \norm{h^{1/2} \curl g}^2_{\Gamma_x}.
\]
Then we have the following lemma.
\begin{lemma} [Continuity] \label{contLemma}
The bilinear form $a(\cdot,\cdot)$ is continuous
with respect to the triple norm $\Tnorm{\cdot}_h$ and we have the following estimate
\[
|a(u, v)|\leq (9 + \gamma) \Tnorm{u}_h \Tnorm{v}_h. 
\]
\end{lemma}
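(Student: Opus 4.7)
\medskip

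\noindent\textbf{Proof plan.} The plan is to bound each of the four terms in the definition of $a(u,v)$ separately by a product of two $\Tnorm{\cdot}_h$-factors, using nothing more than the Cauchy--Schwarz inequality, a careful splitting of the mesh weight $h$, and the trivial cross-product inequality \eqref{triv_ineq}. The whole argument rests on the observation that the triple norm $\Tnorm{g}_h^2=\norm{\curl g}_{\Omega_x}^2+\norm{h^{-1/2}g}_{\Gamma_x}^2+\norm{h^{1/2}\curl g}_{\Gamma_x}^2$ contains \emph{exactly} the three quantities that arise after a standard Cauchy--Schwarz on the volume curl--curl term, on the two mixed boundary terms, and on the penalty term respectively.

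First I would handle the volume term: by Cauchy--Schwarz,
\[
\Bigl|\int_{\Omega_x}\curl u\cdot\curl v\,dx\Bigr|\le \norm{\curl u}_{\Omega_x}\norm{\curl v}_{\Omega_x}\le \Tnorm{u}_h\Tnorm{v}_h.
\]
Next, for the two symmetric boundary coupling terms I would split $1=h^{1/2}\cdot h^{-1/2}$ under the integrand, apply Cauchy--Schwarz, and then employ \eqref{triv_ineq} to absorb the cross product with $n$. Schematically,
\[
\Bigl|\int_{\Gamma_x}\curl u\cdot(v\times n)\,ds\Bigr|
\le \norm{h^{1/2}\curl u}_{\Gamma_x}\,\norm{h^{-1/2}(v\times n)}_{\Gamma_x}
\le 2\,\norm{h^{1/2}\curl u}_{\Gamma_x}\,\norm{h^{-1/2}v}_{\Gamma_x},
\]
and symmetrically for the $(u\times n)\cdot\curl v$ boundary term; each such term is then at most a fixed multiple of $\Tnorm{u}_h\Tnorm{v}_h$. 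Finally, the penalty term is handled directly:
\[
\Bigl|\tfrac{\gamma}{h}\int_{\Gamma_x}u\cdot v\,ds\Bigr|
=\gamma\Bigl|\int_{\Gamma_x}(h^{-1/2}u)\cdot(h^{-1/2}v)\,ds\Bigr|
\le \gamma\,\norm{h^{-1/2}u}_{\Gamma_x}\norm{h^{-1/2}v}_{\Gamma_x}
\le \gamma\Tnorm{u}_h\Tnorm{v}_h.
\]

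Summing the four bounds and collecting the absolute constants (one for the volume term, two $h$-split boundary terms and the $\gamma$-penalty term) yields $|a(u,v)|\le(9+\gamma)\Tnorm{u}_h\Tnorm{v}_h$, as asserted; any looseness in \eqref{triv_ineq} is harmlessly folded into the numerical constant. There is no serious obstacle here: the only delicate point is recognising that the triple norm $\Tnorm{\cdot}_h$ is defined precisely so that the boundary curl-factor $\norm{h^{1/2}\curl\cdot}_{\Gamma_x}$ (absent from the coercivity norm $\norm{\cdot}_h$) is available to pair against the $\norm{h^{-1/2}\cdot}_{\Gamma_x}$ factor on the opposite side of each mixed boundary integral. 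In particular, continuity holds only with respect to $\Tnorm{\cdot}_h$ and not with respect to $\norm{\cdot}_h$ alone, which is the reason for introducing the auxiliary stronger norm in the first place.
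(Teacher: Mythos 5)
Your proposal is correct and follows essentially the same route as the paper: Cauchy--Schwarz on each of the four terms of $a(\cdot,\cdot)$ with the $h^{1/2}\cdot h^{-1/2}$ split on the boundary integrals, the trivial inequality \eqref{triv_ineq} to remove the cross product with $n$, and the observation that each resulting factor is a component of $\Tnorm{\cdot}_h$. Your term-by-term accounting in fact yields the slightly sharper constant $5+\gamma$, which is of course subsumed by the stated bound $9+\gamma$.
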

\begin{proof}
Using inequality \eqref{triv_ineq} and simple algebra we get the result:
\[
\begin{aligned}
|a(u, v)| \leq& \norm{\curl u}_{\Omega_x} \norm{\curl v}_{\Omega_x}
+ \norm{h^{1/2} \curl u}_{\Gamma_x} \norm{h^{-1/2} v \times n}_{\Gamma_x} \\
& + \norm{h^{-1/2} u \times n}_{\Gamma_x} \norm{h^{1/2} \curl v}_{\Gamma_x}
+ \gamma \norm{h^{-1/2} u}_{\Gamma_x} \norm{h^{-1/2} v}_{\Gamma_x} \\
\leq& \Tnorm{u}_h \Tnorm{v}_h + 4\Tnorm{u}_h \Tnorm{v}_h
+ 4 \Tnorm{u}_h \Tnorm{v}_h + \gamma \Tnorm{u}_h \Tnorm{v}_h \\
\leq& (9 + \gamma) \Tnorm{u}_h \Tnorm{v}_h.
\end{aligned}
\]
\end{proof}
For the triple norm $ \Tnorm{\cdot}_h $ we have the following inverse estimate
\[
\Tnorm{g}_h \leq C h^{-1} \norm{g}_{\Omega_x},
\]
which holds for all $ g \in V^x_h $.
We note that the trace estimate implies the coercivity of the form $a(\cdot,\cdot)$
also in the triple norm.

\subsection{A modified Ritz projection}

Let us define a projection $ Q_h : H(\textnormal{curl}, \Omega_x) \rightarrow V^x_h $ by
\begin{equation*} %\label{ProjA}
a(Q_h u, v) = a(u, v) \quad \forall v \in V^x_h.
\end{equation*}
We have the following error estimates for the projection $ Q_h $:
\begin{lemma} \label{projlemma}
There exists a constant $ C $, such that
\begin{equation}
\norm{u - Q_h u}_{\Omega_x} + h\Tnorm{u - Q_h u}_h \leq C h^2 \norm{u}_{H^2(\Omega_x)}.
\end{equation}
\end{lemma}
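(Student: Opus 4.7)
The plan is a standard two-step Ritz-projection argument: a C\'ea-type best-approximation bound in the triple norm, followed by an Aubin-Nitsche duality upgrade to the $L_2$ norm.

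First I would derive the triple-norm estimate. The defining identity $a(Q_h u, v) = a(u, v)$ for all $v \in V^x_h$ gives the Galerkin orthogonality $a(u - Q_h u, v) = 0$. Combining this with coercivity of $a$ in the triple norm (Lemma~\ref{coercLemma} together with the observation immediately after it, that the inverse/trace estimate extends coercivity from $\norm{\cdot}_h$ to $\Tnorm{\cdot}_h$) and continuity (Lemma~\ref{contLemma}), the standard C\'ea manipulation with the splitting $Q_h u - v_h = (Q_h u - u) + (u - v_h)$ yields, for every $v_h \in V^x_h$,
\[
\Tnorm{u - Q_h u}_h \le C \Tnorm{u - v_h}_h.
\]
Next I would choose $v_h = \pi_h u$, the standard nodal $V^x_h$-interpolant of $u$. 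The interior curl contribution is bounded by the classical estimate $\norm{\curl(u - \pi_h u)}_{\Omega_x} \le C h \norm{u}_{H^2(\Omega_x)}$. The two boundary contributions $\norm{h^{-1/2}(u - \pi_h u)}_{\Gamma_x}$ and $\norm{h^{1/2}\curl(u - \pi_h u)}_{\Gamma_x}$ are handled element-by-element via the scaled trace inequality $\norm{w}_{\partial K}^2 \le C ( h_K^{-1} \norm{w}_K^2 + h_K \norm{\nabla w}_K^2 )$ applied respectively to $w = u - \pi_h u$ and $w = \curl(u - \pi_h u)$; standard interpolation estimates then show that each contributes at most $C h \norm{u}_{H^2(\Omega_x)}$. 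This gives $\Tnorm{u - Q_h u}_h \le C h \norm{u}_{H^2(\Omega_x)}$, which is the triple-norm half of the stated bound.

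For the $L_2$ bound I would invoke an Aubin-Nitsche duality argument. Let $\phi$ solve the auxiliary problem $a(v, \phi) = (u - Q_h u, v)_{\Omega_x}$ for all $v \in H(\textnormal{curl}, \Omega_x)$, and assume the elliptic $H^2$-regularity estimate $\norm{\phi}_{H^2(\Omega_x)} \le C \norm{u - Q_h u}_{\Omega_x}$. Testing with $v = u - Q_h u$ and then subtracting $\pi_h \phi$ using Galerkin orthogonality, followed by continuity of $a$ in $\Tnorm{\cdot}_h$ and the first step applied to both $u$ and $\phi$:
\[
\norm{u - Q_h u}_{\Omega_x}^2 = a(u - Q_h u, \phi - \pi_h \phi) \le C \Tnorm{u - Q_h u}_h \Tnorm{\phi - \pi_h \phi}_h \le C h^2 \norm{u}_{H^2(\Omega_x)} \norm{u - Q_h u}_{\Omega_x}.
\]
Dividing by $\norm{u - Q_h u}_{\Omega_x}$ gives the $L_2$ bound and completes the proof.

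The delicate point will be the $H^2$-elliptic regularity for the dual problem associated with the curl-curl operator, since $\curl\,\curl$ has the non-trivial kernel of gradient fields. I would need to verify that the penalty contribution $(\gamma/h)\int_{\Gamma_x} \cdot \, ds$ together with the Nitsche boundary fluxes already present in $a(\cdot,\cdot)$ fix a unique $\phi$ with the required $H^2$ bound; if not, the duality argument must be carried out in an appropriate quotient or gauge-fixed setting. A minor secondary check is ensuring that the trace terms in $a(u, \cdot)$ are well-defined for $u \in H^2(\Omega_x)$ so that consistency (Lemma~\ref{consisLemma}) can be applied in the form used above.
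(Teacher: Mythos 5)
Your proposal is correct and follows essentially the same route as the paper: a C\'ea-type bound in $\Tnorm{\cdot}_h$ via Galerkin orthogonality, coercivity, continuity and nodal interpolation plus scaled trace inequalities, followed by an Aubin--Nitsche duality argument with the curl-curl dual problem and the elliptic regularity bound $\norm{\psi}_{H^2(\Omega_x)} \leq C\norm{u - Q_h u}_{\Omega_x}$. The delicate point you flag about $H^2$-regularity for the curl-curl operator (with its gradient kernel) is real, but the paper simply asserts this stability without further justification, so you are not missing anything the paper supplies.
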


\begin{proof}
Consider the stationary problem
\[
\begin{aligned}
\curl ( \curl \fii ) =& \, f \quad \mbox{in} \,\,\, \Omega_x, \\
\fii =& \, 0 \quad \mbox{on} \,\,\, \Gamma_x.
\end{aligned}
\]
The Nitsche formulation for this problem is given by:
Find $ \fii^h \in V^x_h $, such that
\begin{equation} \label{ProjB}
a(\fii^h, \chi) = (f, \chi) \quad \forall \chi \in V^x_h.
\end{equation}
Further, we have the Galerkin orthogonality
\begin{equation}\label{Galortho}
a(\fii - \fii^h, \chi) = 0 \quad \forall \chi \in V^x_h.
\end{equation}
Hence, the projection $ Q_h $ can be seen as the solution operator of 
\eqref{ProjB}.
We therefore need an a priori error estimate of \eqref{ProjB}.
To this end we split the error into two terms $ \fii - \fii^h = 
(\fii - I_h \fii) + (I_h \fii - \fii^h) = \eta + \xi $,
where $ I_h $ is the standard nodal interpolation operator.
By coercivity, continuity of $ a(\cdot, \cdot) $ and the Galerkin orthogonality 
\eqref{Galortho} we have that 
\[
\Tnorm{\xi}_h^2 \leq C a(\xi, \xi) = 
- C a(\xi, \eta) \leq C \Tnorm{\xi}_h \Tnorm{\eta}_h.
\]
It follows that $ \Tnorm{\xi}_h \leq C \Tnorm{\eta}_h $, 
so it remains to estimate $ \Tnorm{\eta}_h $.
Below we estimate each term in $ \Tnorm{\eta}_h $ separately. 
For the interpolation error we have 
\[
\norm{\curl \eta}^2_{\Omega_x} \leq 2 
\norm{\gradx \eta}^2_{\Omega_x} \leq C h^2 \norm{\fii}^2_{H^2(\Omega_x)}.
\]
As for the boundary integrals, by trace inequality we have the estimates
\[
\begin{aligned}
\norm{h^{-1/2} \eta}^2_{\Gamma_x} &\leq 
C \norm{h^{-1/2} \eta}_{\Omega_x} \norm{h^{-1/2} \eta}_{H^1(\Omega_x)} \\
&\leq C \left( h^{-1} \norm{h^{-1/2} \eta}^2_{\Omega_x} 
+ h \norm{h^{-1/2} \eta}^2_{H^1(\Omega_x)} \right) \\
&\leq C h^2 \norm{\fii}^2_{H^2(\Omega_x)}, 
\end{aligned}
\]
and similarly 
\[
\begin{aligned}
\norm{h^{1/2} \curl \eta}^2_{\Gamma_x} &\leq 
C \left( h^{-1} \norm{h^{1/2} \curl \eta}^2_{\Omega_x}
+ h \norm{h^{1/2} \gradx (\curl \eta)}^2_{\Omega_x} \right) \\
&\leq C h^2 \norm{\fii}^2_{H^2(\Omega_x)}, 
\end{aligned}
\]
where, in both estimates, in the last inequalities 
we have used the interpolation  estimates. 
Summing up we end up with 
\[
\Tnorm{\eta}_h \leq C h \norm{\fii}_{H^2(\Omega_x)}.
\]
It remains to estimate the error in the $ L_2 $-norm. To this end 
we consider the auxiliary problem
\[
\begin{aligned}
\curl ( \curl \psi ) =& \, \fii - \fii^h \quad \mbox{in} \,\,\, \Omega_x, \\
\psi =& \, 0 \quad \mbox{on} \,\,\, \Gamma_x.
\end{aligned}
\]
Multiplying the first equation with $ \fii - \fii^h $ and integrating 
over $\Omega_x$ yields 
\[
\begin{aligned}
\norm{\fii - \fii^h}^2_{\Omega_x} &= ( \fii - \fii^h, \curl (\curl \psi))_{\Omega_x} \\
&= (\curl (\fii - \fii^h), \curl \psi)_{\Omega_x} - \sprod{(\fii - \fii^h) \times n}{\curl \psi}_{\Gamma_x} \\
&= a(\fii - \fii^h, \psi) = a(\fii - \fii^h, \psi - I_h \psi) \\
&\leq \Tnorm{\fii - \fii^h}_h \Tnorm{\psi - I_h \psi}_h \\
&\leq C h^2 \norm{\fii}_{H^2(\Omega_x)} \norm{\psi}_{H^2(\Omega_x)}.
\end{aligned}
\]
By the stability of the elliptic problem 
\[
\norm{\psi}_{H^2(\Omega_x)} \leq C \norm{\fii - \fii^h}_{\Omega_x}
\]
the estimate for the $ L_2 $-norm follows.
\end{proof}

\subsection{Convergence}

Let us split the error as
\[
E - E^h = (E - Q_h E) + (Q_h E - E^h) = \rho + \theta.
\]
In order to bound $ \theta $ we note that
\[
\begin{aligned}
(\theta_{tt}, \chi)_{\Omega_x} + a(\theta, \chi) &= 
(Q_h E_{tt}, \chi)_{\Omega_x} + a(Q_h E, \chi)
 - (E^h_{tt}, \chi)_{\Omega_x} - a(E^h, \chi) \\
&= (Q_h E_{tt}, \chi)_{\Omega_x} + a(Q_h E, \chi) 
- (E_{tt}, \chi)_{\Omega_x} - a(E, \chi) \\
&= - (\rho_{tt}, \chi)_{\Omega_x}
\end{aligned}
\]
for $ \chi \in V^x_h $. We choose $ \chi = \theta_t $ to get
\[
(\theta_{tt}, \theta_t )_{\Omega_x} + a(\theta, \theta_t ) 
= - (\rho_{tt}, \theta_t)_{\Omega_x},
\]
which leads to
\[
\frac{1}{2} \frac{d}{dt} \left( \norm{\theta_t }^2_{\Omega_x} 
+ a(\theta, \theta)\right) \leq \norm{\rho_{tt}}_{\Omega_x} \norm{\theta_t}_{\Omega_x}.
\]
Integrating in time over $ [0, t] $ and noting that $ \theta (0) 
= \theta_t (0) = 0 $ we get the following estimate
\begin{equation} \label{NMCI}
\begin{aligned}
\norm{\theta_t (t)}^2_{\Omega_x} + a(\theta (t), \theta (t)) & 
\leq 2 \int^t_0 \norm{\rho_{ss}}_{\Omega_x} \norm{\theta_s}_{\Omega_x} ds \\
&\leq 2 \int^t_0 \norm{\rho_{ss}}_{\Omega_x} ds \max_{s \in [0, T]} \norm{\theta_t}_{\Omega_x} \\
&\leq 2 \left( \int^t_0 \norm{\rho_{ss}}_{\Omega_x} ds \right)^2 
+ \frac{1}{2} \left( \max_{s \in [0, T]} \norm{\theta_t}_{\Omega_x} \right)^2.
\end{aligned}
\end{equation}
Since this holds for all $ t \in [0, T] $ and 
$ a(\theta (t), \theta (t)) \geq 0 $, we have
\[
\frac{1}{2} \left( \max_{s \in [0, T]} \norm{\theta_t}_{\Omega_x}
 \right)^2 \leq 2 \left( \int^T_0 \norm{\rho_{ss}}_{\Omega_x} ds \right)^2.
\]
Inserting this into \eqref{NMCI} and using Lemma \ref{projlemma} leads to
\[
\norm{\theta_t (t)}^2_{\Omega_x} + a(\theta (t), \theta (t)) \leq 4 \left( \int^T_0 \norm{\rho_{ss}}_{\Omega_x} ds \right)^2
\leq 4 \left( C h^2 \int^T_0 \norm{E_{ss}}_{H^2(\Omega_x)} ds \right)^2.
\]
It follows that
\begin{equation} \label{NMCII}
\norm{\theta_t (t)}_{\Omega_x} \leq C h^2 \int^T_0 \norm{E_{ss}}_{H^2(\Omega_x)} ds
\end{equation}
and
\begin{equation} \label{NMCIII}
\Tnorm{\theta (t)}_h \leq C h^2 \int^T_0 \norm{E_{ss}}_{H^2(\Omega_x)} ds.
\end{equation}
Next we note that
\[
\begin{aligned}
2 \norm{\theta}_{\Omega_x} \frac{d}{dt} \norm{\theta}_{\Omega_x}
&= \frac{d}{dt} \norm{\theta}^2_{\Omega_x}
= \frac{d}{dt} \int_{\Omega_x} | \theta|^2 dx \\
&= 2 \int_{\Omega_x} \theta \cdot \theta_t dx
\leq 2 \norm{\theta}_{\Omega_x} \norm{\theta_t }_{\Omega_x}.
\end{aligned}
\]
After cancellation and integration we have
\begin{equation} \label{NMCIV}
\norm{\theta (t)}_{\Omega_x} \leq \int^t_0 \norm{\theta_s (s)}_{\Omega_x}\,  ds 
\leq C h^2 t \int^T_0 \norm{E_{ss}}_{H^2(\Omega_x)} ds.
\end{equation}

Now we have the following a priori error estimates theorem.
\begin{theorem}\label{semidiscTh}
Let $ E $ and $ E^h $ be the solutions of \eqref{MaxwellB} and \eqref{NM2}, 
respectively, such that
$E(t),E_t(t)\in H^2(\Omega_x) $
and $E_{tt}\in L_1\left((0,T);H^2(\Omega_x)\right)$.
Then, there exists a positive constant $ C $ such that for $ t \geq 0$,
\[
\begin{aligned}
\norm{E (t) - E^h (t)}_{\Omega_x} &\leq C h^2 \norm{E (t)}_{H^2(\Omega_x)} + C h^2 t \int^T_0 \norm{E_{ss}}_{H^2(\Omega_x)} ds, \\
\norm{E_t (t) - E^h_t (t)}_{\Omega_x} &\leq C h^2 \norm{E_t (t)}_{H^2(\Omega_x)} + C h^2 \int^T_0 \norm{E_{ss}}_{H^2(\Omega_x)} ds, \\
\Tnorm{E (t) - E^h (t)}_h &\leq C h \norm{E (t)}_{H^2(\Omega_x)} + C h^2 \int^T_0 \norm{E_{ss}}_{H^2(\Omega_x)} ds.
\end{aligned}
\]
\end{theorem}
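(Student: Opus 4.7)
The plan is to combine the three intermediate bounds \eqref{NMCII}, \eqref{NMCIII}, \eqref{NMCIV} that were already established for $\theta = Q_h E - E^h$ with the projection error estimates from Lemma \ref{projlemma}, using the splitting $E - E^h = \rho + \theta$ and the triangle inequality. In this sense the proof is essentially an accounting exercise; the real analytical work has been done upstream in the energy identity for $\theta$ (via testing $(\theta_{tt},\chi)_{\Omega_x} + a(\theta,\chi) = -(\rho_{tt},\chi)_{\Omega_x}$ against $\chi = \theta_t$, using $\theta(0)=\theta_t(0)=0$) and in the duality argument for Lemma \ref{projlemma}.

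For the $L_2$ bound on $E(t) - E^h(t)$ I would simply write $\norm{E(t) - E^h(t)}_{\Omega_x} \leq \norm{\rho(t)}_{\Omega_x} + \norm{\theta(t)}_{\Omega_x}$, apply Lemma \ref{projlemma} at time $t$ to the first term to get $Ch^2 \norm{E(t)}_{H^2(\Omega_x)}$, and use \eqref{NMCIV} for the second. For the $L_2$ bound on the time derivative, the key observation is that the bilinear form $a(\cdot,\cdot)$ is independent of $t$, so the projection $Q_h$ commutes with $\partial_t$: one can differentiate $a(Q_h E, v) = a(E,v)$ in $t$ to conclude $(Q_h E)_t = Q_h(E_t)$, hence $\rho_t = E_t - Q_h E_t$. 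Then Lemma \ref{projlemma} applied to $E_t(t)$ controls $\norm{\rho_t(t)}_{\Omega_x}$ by $Ch^2 \norm{E_t(t)}_{H^2(\Omega_x)}$, while \eqref{NMCII} controls $\norm{\theta_t(t)}_{\Omega_x}$.

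For the triple-norm estimate, split $\Tnorm{E(t) - E^h(t)}_h \leq \Tnorm{\rho(t)}_h + \Tnorm{\theta(t)}_h$; here Lemma \ref{projlemma} only yields $O(h)$ on the first term (whence the loss of one power of $h$ compared to the $L_2$ estimates), and \eqref{NMCIII} gives $O(h^2)$ on the second, so the dominant term is $Ch\norm{E(t)}_{H^2(\Omega_x)}$.

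The one point I would flag is that the derivation of \eqref{NMCII}--\eqref{NMCIV} implicitly required $\theta(0) = \theta_t(0) = 0$, which is compatible with the theorem only if the discrete initial data are chosen as $E^h(0) = Q_h E(0)$ and $E^h_t(0) = Q_h E_t(0)$; I would make this explicit as a standing hypothesis. Beyond that bookkeeping remark, no step presents a genuine obstacle, since all of the difficulty (coercivity, consistency, duality for the elliptic auxiliary problem, and the Gr\"onwall-style absorption in the wave-type energy estimate) has already been absorbed into Lemmas \ref{consisLemma}--\ref{projlemma} and the bounds \eqref{NMCII}--\eqref{NMCIV}.
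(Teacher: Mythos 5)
Your proposal is correct and follows essentially the same route as the paper: the paper's proof is exactly the triangle inequality applied to the splitting $E-E^h=\rho+\theta$, invoking Lemma \ref{projlemma} for $\rho$, $\rho_t$ and $\Tnorm{\rho}_h$ (the latter only $O(h)$) and the bounds \eqref{NMCII}--\eqref{NMCIV} for $\theta$. Your two side remarks --- that $Q_h$ commutes with $\partial_t$ because $a(\cdot,\cdot)$ is time-independent, and that \eqref{NMCII}--\eqref{NMCIV} presuppose $\theta(0)=\theta_t(0)=0$, i.e.\ discrete initial data $E^h(0)=Q_hE(0)$, $E^h_t(0)=Q_hE_t(0)$ --- are both accurate and are left implicit in the paper.
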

The proof follows from \eqref{NMCII}-\eqref{NMCIV} together with Lemma \ref{projlemma}.
\begin{remark}
For the magnetic field $ B $ we get a slightly different system of equations,
but the same error estimates will hold.
\end{remark}

\subsection{Time discretization}

Let $ \{ t_m \}_{m=0}^{M} $ be a uniform partition of $ [0, T ] $ 
of step size $ k = T/M $.
Before formulating the fully discrete problem, 
we introduce the following notations of difference quotients 
\[
\begin{aligned}
\dbart u^m &= \frac{u^m - u^{m-1}}{k}, \\
\dbartt u^m &= \frac{u^m - 2 u^{m-1} + u^{m-2}}{k^2}, \\
\hat{u}^m &= \frac{u^m + 2 u^{m-1} + u^{m-2}}{4},
\end{aligned}
\]
where $ u^m = u(t_m ) $. Then a fully discrete problem reads as follows: 
for $ m = 2, 3, \ldots, M $, find $ \EE^m $ such that
\begin{equation} \label{TDNM0}
(\dbartt \EE^m, \chi) + a(\hat{\EE}^m, \chi) = -(j_t^{m - 1}, \chi) \quad \forall \chi \in V^x_h.
\end{equation}
The choices of the first two approximations 
$ \EE^0 $ and $ \EE^1 $ will be discussed later.
We split the error as 
\[
e^m = E^m - \EE^m = (E^m - Q_h E^m) + (Q_h E^m - \EE^m) = \rho^m + \theta^m.
\]
We use Lemma \ref{projlemma} to estimate $ \rho^m $,
hence it remains to estimate $ \theta^m $.
To do so we note that
\[
\begin{aligned}
( \dbartt \theta^m, \chi)  + a( \hat{\theta}^m, \chi) &=
( \dbartt Q_h E^m, \chi) + a(Q_h \hat{E}^m, \chi) - 
( \dbartt \EE^m, \chi) - a( \hat{\EE}^m, \chi) \\
&= ( \dbartt Q_h E^m, \chi) + a(Q_h \hat{E}^m, \chi) - (j_t^{m - 1}, \chi) \\
&= ( \dbartt Q_h E^m, \chi) + a(Q_h \hat{E}^m, \chi) -
(E_{tt}^{m - 1}, \chi) - a(E^{m - 1}, \chi) \\
&= ( \omega^m, \chi) + \frac{k^2}{4} a(\dbartt Q_h E^m, \chi),
\end{aligned}
\]
where $ \omega^m = \dbartt Q_h E^m - E_{tt}^{m - 1} $.
Choose
\[
\chi = \theta^m - \theta^{m - 2} = k(\dbart \theta^m + \dbart \theta^{m - 1})
= (\theta^m + \theta^{m - 1}) - ( \theta^{m - 1} + \theta^{ m -2}).
\]
Then we have
\begin{equation}
\begin{aligned} \label{TDNM1}
( \dbartt \theta^m, \theta^m - \theta^{m - 2})  + a( \hat{\theta}^m, \theta^m - \theta^{m - 2})
=&\frac{1}{k} (\dbart \theta^m - \dbart \theta^{m - 1}, \theta^m - \theta^{m - 2}) \\
&+ a(\hat{\theta}^m, \theta^m - \theta^{m - 2}) \\
=& ( \omega^m, \theta^m - \theta^{m - 2}) \\
&+ \frac{k^2}{4} a(\dbartt Q_h E^m, \theta^m - \theta^{m - 2}).
\end{aligned}
\end{equation}
We define the discrete energy
\[
\bbE^m = \norm{\dbart \theta^m}^2_{\Omega_x} + 
\frac{1}{4} a(\theta^m + \theta^{m - 1}, \theta^m + \theta^{m - 1}).
\]
Now, if in the left hand side of \eqref{TDNM1}
we use the second form of $ \chi $ in the first term and the third form in the second term,
and in the right hand side we use the second form of $ \chi $ in both terms, then we get
\[
\bbE^m - \bbE^{m - 1} = k ( \omega^m, \dbart \theta^m + \dbart \theta^{m - 1})
+ \frac{k^3}{4} a(\dbartt Q_h E^m, \dbart \theta^m + \dbart \theta^{m - 1}).
\]
We estimate the right hand side
using the continuity of $a(\cdot, \cdot)$, with $C_a=(9+\gamma)$,
and the inverse inequality for the triple norm to get
\[
\begin{aligned}
\bbE^m - \bbE^{m-1} \leq & k \norm{\omega^m}_{\Omega_x} \left( \norm{\dbart \theta^m}_{\Omega_x} 
+ \norm{\dbart \theta^{m - 1}}_{\Omega_x} \right)\\
&+ C_a\frac{k^3}{4} \Tnorm{\dbartt Q_h E^m}_h \left( \Tnorm{\dbart \theta^m}_h + \Tnorm{\dbart \theta^{m - 1}}_h \right) \\
\leq & k \norm{\omega^m}_{\Omega_x} \left( \sqrt{\bbE^m} + \sqrt{\bbE^{m - 1}} \right)\\
&+ C_a\frac{k^3 h^{-2}}{4} \norm{\dbartt Q_h E^m}_{\Omega_x}
\left( \norm{\dbart \theta^m}_{\Omega_x} + \norm{\dbart \theta^{m - 1}}_{\Omega_x} \right) \\
\leq& \left( k \norm{\omega^m}_{\Omega_x} + C_a\frac{k^3 h^{-2}}{4}
 \norm{\dbartt Q_h E^m}_{\Omega_x} \right) \left( \sqrt{\bbE^m} + \sqrt{\bbE^{m - 1}} \right).
\end{aligned}
\]
After cancellation it follows that
\[
\sqrt{\bbE^m} \leq \sqrt{\bbE^{m - 1}} + k \norm{\omega^m}_{\Omega_x}
+ C_a\frac{k^3 h^{-2}}{4} \norm{\dbartt Q_h E^m}_{\Omega_x}.
\]
Iterating the above inequality leads to
\begin{equation} \label{TDNM2}
\sqrt{\bbE^m} \leq \sqrt{\bbE^1} + k \sum_{j = 2}^m \norm{\omega^j}_{\Omega_x} + 
C_a\frac{k^3 h^{-2}}{4} \sum_{j = 2}^m \norm{\dbartt Q_h E^j}_{\Omega_x}.
\end{equation}
Now we estimate the terms on the right hand side.
Let us begin with $ \omega^j $ and split it as
\[
\omega^j = (Q_h - I) \dbartt E^j + (\dbartt E^j - E_{tt}^{j-1}) =: 
\omega_1^j + \omega_2^j.
\]
We write $ \omega_1^j $ in the following way
\[
\omega_1^j = \frac{1}{k^2} (Q_h - I) (E^j - 2 E^{j - 1} + E^{j - 2}) = 
\frac{1}{k^2} (Q_h - I) \left( \int_{t_{j - 1}}^{t_j} E_t \, dt - 
\int_{t_{j - 2}}^{t_{j - 1}} E_t \, dt \right).
\]
Summing over $ j $ and using Lemma \ref{projlemma} gives
\[
k \sum_{j = 2}^m \norm{\omega_1^j}_{\Omega_x} \leq \frac{1}{k} 
\sum_{j = 2}^m \int_{t_{j - 2}}^{t_j} \norm{(Q_h - I) E_t}_{\Omega_x} \, dt
\leq \frac{2C h^2}{k} \int_0^{t_m} \norm{E_t}_{H^2 (\Omega_x)} \, dt.
\]
As for $ \omega_2^j $ we use Taylor expansion of  $ E^j $ and $ E^{j - 2} $ 
in  polynomials of degree 2 about $ t_{j - 1} $.
Then, due to cancellations, we end up with 
\[
\omega_2^j = 
\frac{1}{6 k^2} \left( \int_{t_{j - 1}}^{t_j} (t - t_{j - 1})^2 E_{ttt} \, dt 
- \int_{t_{j - 2}}^{t_{j - 1}} (t - t_{j - 1})^2 E_{ttt} \, dt \right).
\]
Once again summing over $ j $ leads to
\[
k  \sum_{j = 2}^m \norm{\omega_2^j}_{\Omega_x} \leq \frac{1}{6 k}  
\sum_{j = 2}^m \int_{t_{j - 2}}^{t_j} (t - t_{j - 1})^2 \norm{E_{ttt}}_{\Omega_x} \, dt
\leq \frac{k}{3} \int_0^{t_m} \norm{E_{ttt}}_{\Omega_x} \, dt.
\]
The third term in \eqref{TDNM2} will be estimated as follows
\[
\begin{aligned}
\frac{k^3 h^{-2}}{4} \sum_{j = 2}^m \norm{\dbartt Q_h E^j}_{\Omega_x} &\leq
\frac{k^3 h^{-2}}{4} \sum_{j = 2}^m \norm{\omega^j}_{\Omega_x} + \norm{E_{tt}^{j - 1}}_{\Omega_x}.
\end{aligned}
\]
It remains to estimate $ \bbE^1 $, which depends on how $ \EE^0 $ and 
$ \EE^1 $ are chosen.
Let $ \EE^0 = Q_h E^0 $ and assume that $ \EE^1 $ 
is chosen such that $ \norm{\theta^1}_{\Omega_x} \leq C(h^2 + k^2) $.
Then we have
\begin{equation*}
\begin{aligned}
\sqrt{\bbE^1} = \sqrt{\norm{\frac{\theta^1}{k}}^2_{\Omega_x} 
+ \frac{1}{4} a(\theta^1, \theta^1)}
&\leq \frac{1}{k} \norm{\theta^1}_{\Omega_x} + \frac{1}{2} \Tnorm{\theta^1}_h\\
&\leq \left( \frac{1}{k} + \frac{1}{2 h} \right) 
\norm{\theta^1}_{\Omega_x} 
\leq C \left( \frac{1}{k} + \frac{1}{2 h} \right) (h^2 + k^2).
\end{aligned}
\end{equation*}
We combine the above estimates to get $ \norm{\dbart \theta^m}_{\Omega_x}
\leq C(h + k) $ for $ m = 2, 3, \ldots, M $,
assuming that $k$ is proportional to $h$.
Finally, if we use the following estimate
\[
\norm{\theta^m}_{\Omega_x} \leq \norm{\theta^1} _{\Omega_x}
+ k\sum_{j=2}^m \norm{\dbart \theta^j}_{\Omega_x},
\]
then we also have that $ \norm{\theta^m}_{\Omega_x} \leq C(h + k) $.
We sum up the results in the following theorem.
\begin{theorem}
Let $ E $ and $ \EE^m $ be the solutions of 
\eqref{MaxwellB} and \eqref{TDNM0}, respectively.
Under the assumptions of Theorem \ref{semidiscTh},
assuming moreover  that $ E_t \in L_1\left((0,T); H^2 ( \Omega_x)\right) $, 
$ E_{ttt} \in L_1 \left((0,T); L_2(\Omega_x)\right) $ 
and that $ \EE^1 $ is chosen such that $ \norm{\theta^1}_{\Omega_x} \leq C(h^2 + k^2) $,
we have the following error estimate
\[
\norm{E(t_m) - \EE^m}_{\Omega_x} \leq C(h + k).
\]
\end{theorem}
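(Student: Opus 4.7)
The plan is to use the splitting $E^m - \EE^m = \rho^m + \theta^m$ already introduced before the statement, with $\rho^m = E^m - Q_h E^m$ and $\theta^m = Q_h E^m - \EE^m$. The projection part is immediate: Lemma \ref{projlemma} gives $\norm{\rho^m}_{\Omega_x} \le Ch^2 \norm{E^m}_{H^2(\Omega_x)}$, so everything reduces to proving $\norm{\theta^m}_{\Omega_x} \le C(h+k)$, after which the triangle inequality delivers the final bound.

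For $\theta^m$ I would first assemble the error identity by subtracting \eqref{TDNM0} from the consistency relation of Lemma \ref{consisLemma} evaluated at $t_{m-1}$ and using $a(Q_h u, \chi) = a(u,\chi)$. This gives
\[
(\dbartt \theta^m, \chi) + a(\hat{\theta}^m, \chi)
= (\omega^m, \chi) + \tfrac{k^2}{4}\, a(\dbartt Q_h E^m, \chi),
\qquad \omega^m := \dbartt Q_h E^m - E_{tt}^{m-1}.
\]
Testing with $\chi = \theta^m - \theta^{m-2} = k(\dbart \theta^m + \dbart \theta^{m-1})$ and using this identity in its two equivalent forms on the left, the left-hand side telescopes with respect to the discrete energy
\[
\bbE^m = \norm{\dbart \theta^m}_{\Omega_x}^2 + \tfrac{1}{4}\, a(\theta^m + \theta^{m-1}, \theta^m + \theta^{m-1}).
\]
Applying Cauchy-Schwarz in $L_2$ to the $\omega^m$-term and the triple-norm continuity of $a(\cdot,\cdot)$ combined with the inverse inequality $\Tnorm{g}_h \le Ch^{-1}\norm{g}_{\Omega_x}$ on $V_h^x$ to the second term, then dividing through by $\sqrt{\bbE^m}+\sqrt{\bbE^{m-1}}$ and iterating, yields
\[
\sqrt{\bbE^m} \le \sqrt{\bbE^1} + k\sum_{j=2}^m \norm{\omega^j}_{\Omega_x} + C\,\frac{k^3 h^{-2}}{4} \sum_{j=2}^m \norm{\dbartt Q_h E^j}_{\Omega_x}.
\]

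The three right-hand side terms are then estimated separately. For $\omega^j$ I would split $\omega^j = (Q_h - I)\dbartt E^j + (\dbartt E^j - E_{tt}^{j-1})$; the first piece is rewritten as a telescoped double time-integral of $E_t$ and controlled by Lemma \ref{projlemma} together with the hypothesis $E_t \in L_1(0,T;H^2)$, producing a contribution of order $h^2/k \cdot \norm{E_t}_{L_1(0,T;H^2)}$, whereas the second follows from a Taylor expansion of $E^j$ and $E^{j-2}$ about $t_{j-1}$ with integral remainder against $E_{ttt}\in L_1(0,T;L_2)$, producing an $O(k)$ contribution. The third sum is bounded using $\norm{\dbartt Q_h E^j}_{\Omega_x} \le \norm{\omega^j}_{\Omega_x} + \norm{E_{tt}^{j-1}}_{\Omega_x}$. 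The initialization $\EE^0 = Q_h E^0$ and the hypothesis on $\EE^1$ give $\sqrt{\bbE^1} \le (k^{-1}+(2h)^{-1})\norm{\theta^1}_{\Omega_x} \le C(h+k)$. Having established $\norm{\dbart \theta^m}_{\Omega_x} \le C(h+k)$, I conclude $\norm{\theta^m}_{\Omega_x} \le \norm{\theta^1}_{\Omega_x} + k\sum_{j=2}^m \norm{\dbart \theta^j}_{\Omega_x} \le C(h+k)$, and combine with the $\rho^m$ bound.

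The main obstacle will be the $k^3 h^{-2}$ contribution arising from the $\tfrac{k^2}{4} a(\dbartt Q_h E^m, \chi)$ consistency remainder, which is an artifact of the averaged term $\hat{\theta}^m$ replacing $\theta^{m-1}$ in the scheme. Absorbing it only through the inverse inequality is what forces the CFL-type coupling $k \sim h$ and ultimately caps the rate at $O(h+k)$, rather than the $O(h^2+k^2)$ suggested by each truncation piece in isolation; every other term fits cleanly into the energy-telescoping framework.
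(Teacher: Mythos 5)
Your proposal follows essentially the same route as the paper's own argument: the same splitting $E^m-\EE^m=\rho^m+\theta^m$, the same error identity with $\omega^m=\dbartt Q_h E^m-E_{tt}^{m-1}$, the test function $\chi=\theta^m-\theta^{m-2}$, the discrete energy $\bbE^m$ with its telescoping bound, the decomposition of $\omega^j$ into a projection part and a Taylor-remainder part, and the final summation $\norm{\theta^m}_{\Omega_x}\leq\norm{\theta^1}_{\Omega_x}+k\sum_{j=2}^m\norm{\dbart\theta^j}_{\Omega_x}$. The argument is correct, and your closing observation about the $k^3h^{-2}$ consistency remainder forcing the coupling $k\sim h$ matches the paper's treatment exactly.
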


\begin{remark}
If we assume extra regularity on $ E $ we can prove that
\[
\norm{E(t_m) - \EE^m}_{\Omega_x} \leq C(h^2 + k^2).
\]
More precisely, we need to assume that
$ E_{tt} \in L_1\left((0,T); H^2 ( \Omega_x)\right) $, 
$ E_{tttt} \in L_1 \left((0,T); L_2(\Omega_x)\right) $ 
and that $ \EE^1 $ is chosen such that $ \norm{\theta^1}_{\Omega_x} \leq C(h^3 + k^3) $.
\end{remark}

\section{Numerical Results}

Here we present some numerical results justifying the accuracy of our method.
We performed the calculations for the simplified case of one space variable
and two velocities variables, i.e. the one and one-half dimensional
Vlasov-Maxwell system (cf \cite{Standar:2016}), which takes the following form:
\[
\begin{array}{c}
\d_t f + v_1 \d_x f + (E_1 + v_2 B)\d_{v_1} f + (E_2-v_1 B)\d_{v_2} f=0, \\
\d_t E_1 = - \int v_1 f dv= -j_1(t,x), \\
\d_t E_2 +\d_x B = -\int v_2 f dv=-j_2(t,x), \\
\d_t B + \d_x E_2 =0,
\end{array}
\]
where $f=f(t,x,v_1,v_2)$, $E_1=E_1(t,x)$, $E_2=E_2(t,x)$, $ B=B(t,x)$
with $x\in\Omega_x\subset\R$ and $v=(v_1,v_2)\in\Omega_v\subset\R^2$.
We assume here the non-relativistic case of the Vlasov-Maxwell system,
since there is a wider literature available to compare the numerical test with.
We note that our theoretical results are also valid in this case.

The initial conditions are given by
\[
\begin{array}{c}
f(0,x,v_1,v_2) = \frac{1}{\pi\beta}e^{-v_1^2/\beta}
\left[\mu e^{-(v_2-v_{0,1})^2/\beta}+(1-\mu)e^{-(v_2+v_{0,2})^2/\beta}\right], \\
E_1(0,x) = E_2(0,x) = 0, \quad B(0,x) = -b\sin(k_0x_1),
\end{array}
\]
which corresponds to the streaming Weibel instability (cf \cite{Che})
with $\beta = 0.01$ and $b=0.001$.
We perform the calculations for two sets of values of parameters:
\[
\begin{array}{ll}
\mbox{case 1:} & \mu=0.5,\ v_{0,1}=v_{0,2}=-0.3,\ k_0=0.2,\\
\mbox{case 2:} & \mu=1/6,\ v_{0,1}=-0.5,\ v_{0,2}=-0.1,\ k_0=0.2,\\
\end{array}
\]
with $x\in[0,L]$, $L=2\pi/k_0$.
Periodic boundary condition is assumed for $x$ variable, which
we normalized in our computations taking $x\in[0,1]$.
For the accuracy test we set $\Omega_v=[-1,1]^2$,
whereas for the other test we set $\Omega_v=[-1.1,1.1]^2$.

\subsection{Accuracy tests}

The Vlasov-Maxwell system is reversible in time for the above.
Thus denoting the initial conditions as $f(0,x,v)$, $E(0,x)$, $B(0,x)$,
we get at time $t=T$ the solution $f(T,x,v)$, $E(T,x)$, $B(T,x)$.
Now, taking $f(T,x,-v)$, $E(T,x)$, $-B(T,x)$ as the initial solution at $t=0$,
we recover $f(0,x,-v)$, $E(0,x)$, $-B(0,x)$ at $t=T$.

Using this theoretical fact we run the calculation for $T=5$
and show the $L_1$ and $L_2$ errors of solutions
for several choices of degree of polynomials $p$ and mesh parameters
$h_t$, $h_x$, $h_v$.
For all calculations we used the uniform degrees $p$ in all cells of uniform meshes.
We present the results for the following choice of mesh sizes sets:
$H_1$ corresponds to $h_t=h_x=0.1$ and $h_v=\sqrt{2}/6$;
$H_2$ corresponds to $h_t=h_x=0.05$ and $h_v=\sqrt{2}/12$;
$H_3$ corresponds to $h_t=h_x=0.025$ and $h_v=\sqrt{2}/24$.

Table \ref{table1} lists the errors for the fixed mesh set $H_1$
and increasing degree of finite elements polynomial approximation,
whereas in Table \ref{table2} we list the errors for the fixed degree $p=1$
of polynomial approximation and different mesh sizes.

\begin{table}[!h]
\centering
\caption{$L_1$ and $L_2$ errors for different polynomial degrees and fixed
mesh sizes set $H_1$.}
\begin{tabular}{cccccc}
\hline
Error & Degree & $f$ & $E_1$ & $E_2$ & $B$ \\
\hline
$L_1$ & $p=1$ & 3.801e-1 & 7.086e-4  & 1.599e-6  & 1.645e-5 \\
      & $p=2$ & 1.614e-1 & 3.248e-9  & 1.770e-7  & 9.092e-7 \\
      & $p=3$ & 1.891e-2 & 2.295e-10 & 9.753e-9  & 3.321e-8 \\
\hline
$L_2$ & $p=1$ & 7.302e-1 & 6.204e-7  & 3.517e-12 & 4.303e-10 \\
      & $p=2$ & 1.632e-1 & 1.498e-17 & 4.113e-14 & 1.070e-12 \\
      & $p=3$ & 2.833e-3 & 6.648e-20 & 1.185e-16 & 2.186e-15 \\
\hline
\end{tabular}
\label{table1}
\end{table} 

\begin{table}[!h]
\centering
\caption{$L_1$ and $L_2$ errors for different mesh sizes and fixed
polynomial degree $p=1$.}
\begin{tabular}{cccccc}
\hline
Error & Mesh sizes set & $f$ & $E_1$ & $E_2$ & $B$ \\
\hline
$L_1$ & $H_1$ & 3.801e-1 & 7.086e-4  & 1.599e-6  & 1.645e-5 \\
      & $H_2$ & 1.629e-1 & 8.304e-10 & 1.791e-7  & 8.387e-6 \\
      & $H_3$ & 4.324e-2 & 2.016e-10 & 4.750e-8  & 2.099e-6 \\
\hline
$L_2$ & $H_1$ & 7.302e-1 & 6.204e-7  & 3.517e-12 & 4.303e-10 \\
      & $H_2$ & 1.939e-1 & 8.520e-19 & 3.956e-14 & 9.298e-11 \\
      & $H_3$ & 1.444e-2 & 5.014e-20 & 2.784e-15 & 5.850e-12 \\
\hline
\end{tabular}
\label{table2}
\end{table} 

We can see from the tables the convergence of our method for all functions.
We present the results for one value of the stability parameter $\delta=0.05$,
since its choice does not influence importantly (in some reasonable interval of values)
the accuracy, but only the stability of the method.

\subsection{Streaming Weibel instability tests}

In this section we present the preliminary results
for the streaming Weibel instability tests. More results will be included
in the forthcoming paper \cite{Malmberg_Standar}.

We present the time evolution of the magnetic, electric and kinetic energies
for both cases of parameters values.
The calculations were carried out for the mesh sizes
$h_t=1/20$, $h_x=1/30$, $h_v=\sqrt{2}\cdot 11/300$ with $p=1$.
We plot the components of electric energy $E_i=\frac{1}{2L}\int_0^L E_i^2\,dx_1$,
$i=1,2$, and the magnetic energy $B=\frac{1}{2L}\int_0^L B^2\,dx_1$ in Figure~\ref{fig1}.
The kinetic energy is showed in Figure~\ref{fig2} as the separate components
defined by $K_i=\frac{1}{2L}\int_0^L\int_{\Omega_v} v_i^2 f \,dvdx_1$, $i=1,2$.
The qualitative behaviour of the time evolution of both the electromagnetic
and kinetic energies is in agreement with the theory
and the results presented in \cite{Che} for different numerical methods.

\begin{figure}[!h]
\centerline{
\includegraphics[width=0.5\textwidth]{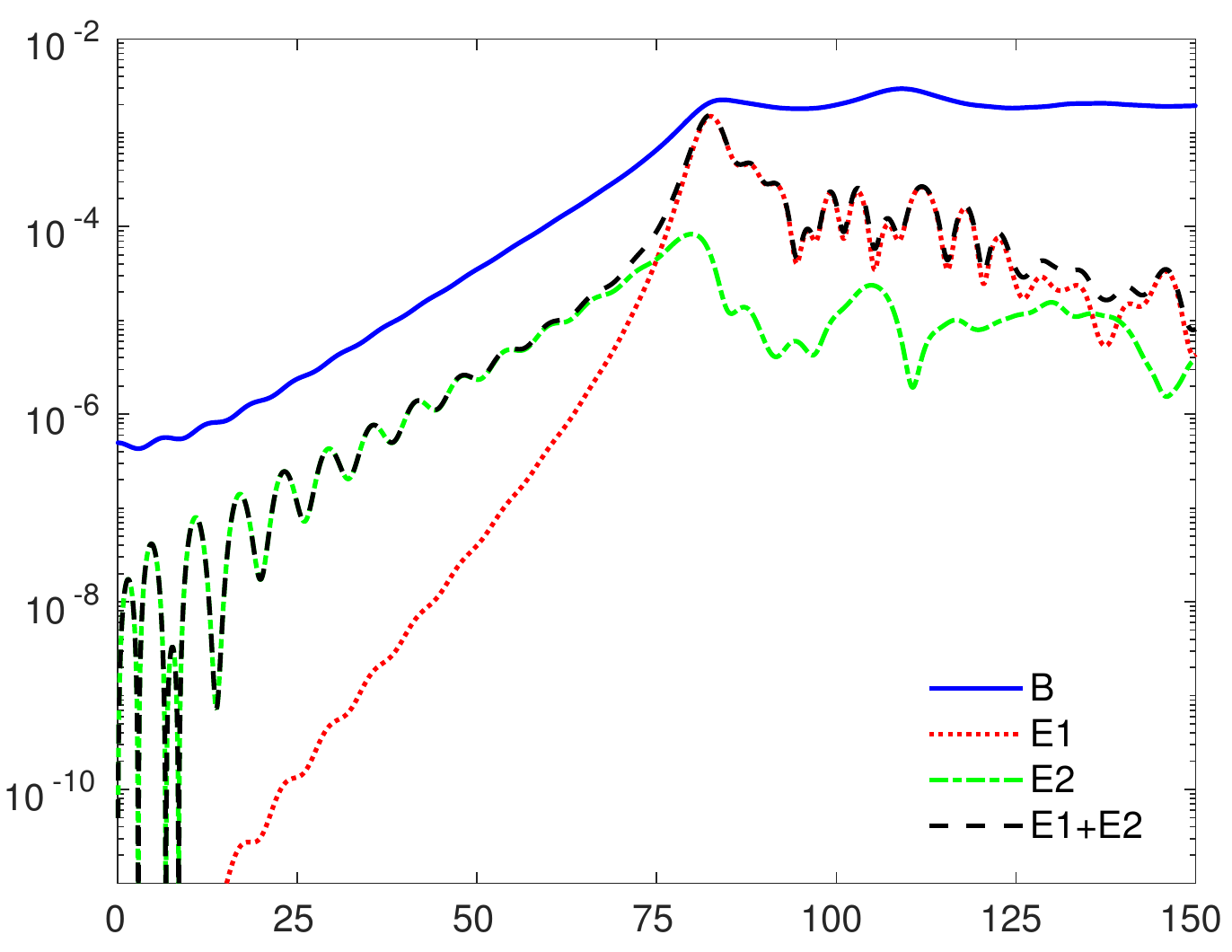}
\includegraphics[width=0.5\textwidth]{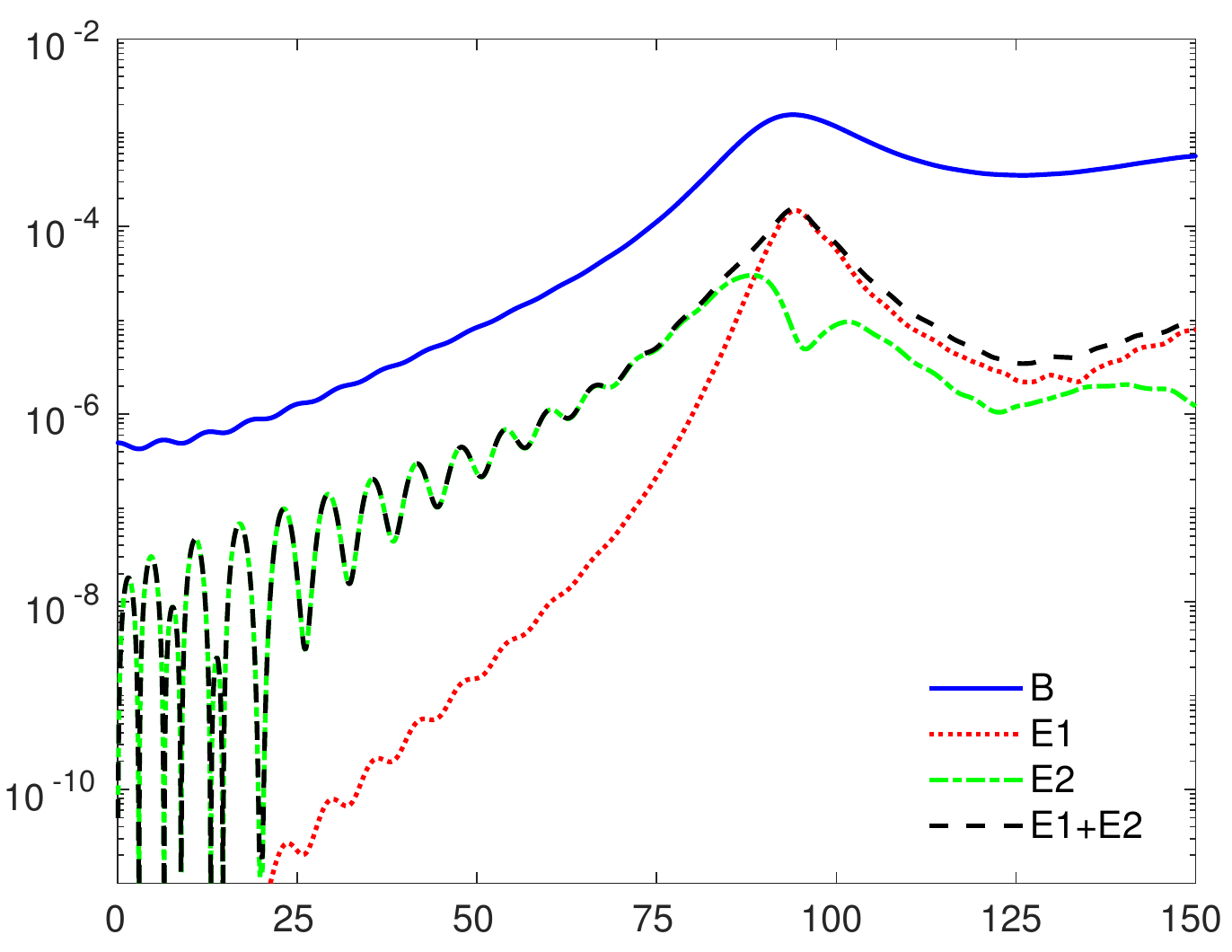}
}
\caption{Magnetic ($B$) and electric ($E_1$, $E_2$) energy for case~1 (left)
and case 2 (right).}
\label{fig1}
\end{figure}

\begin{figure}[!h]
\centerline{
\includegraphics[width=0.5\textwidth]{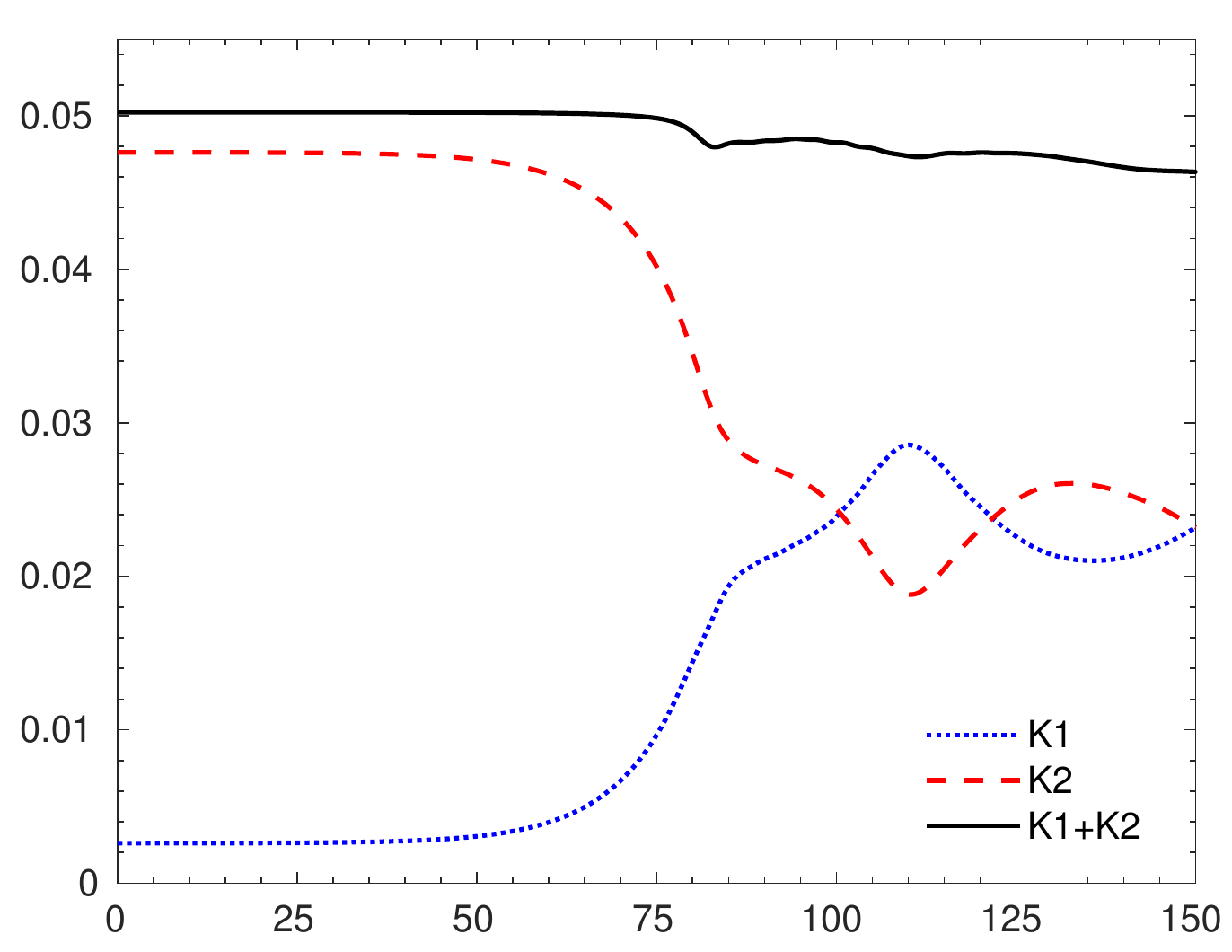}
\includegraphics[width=0.5\textwidth]{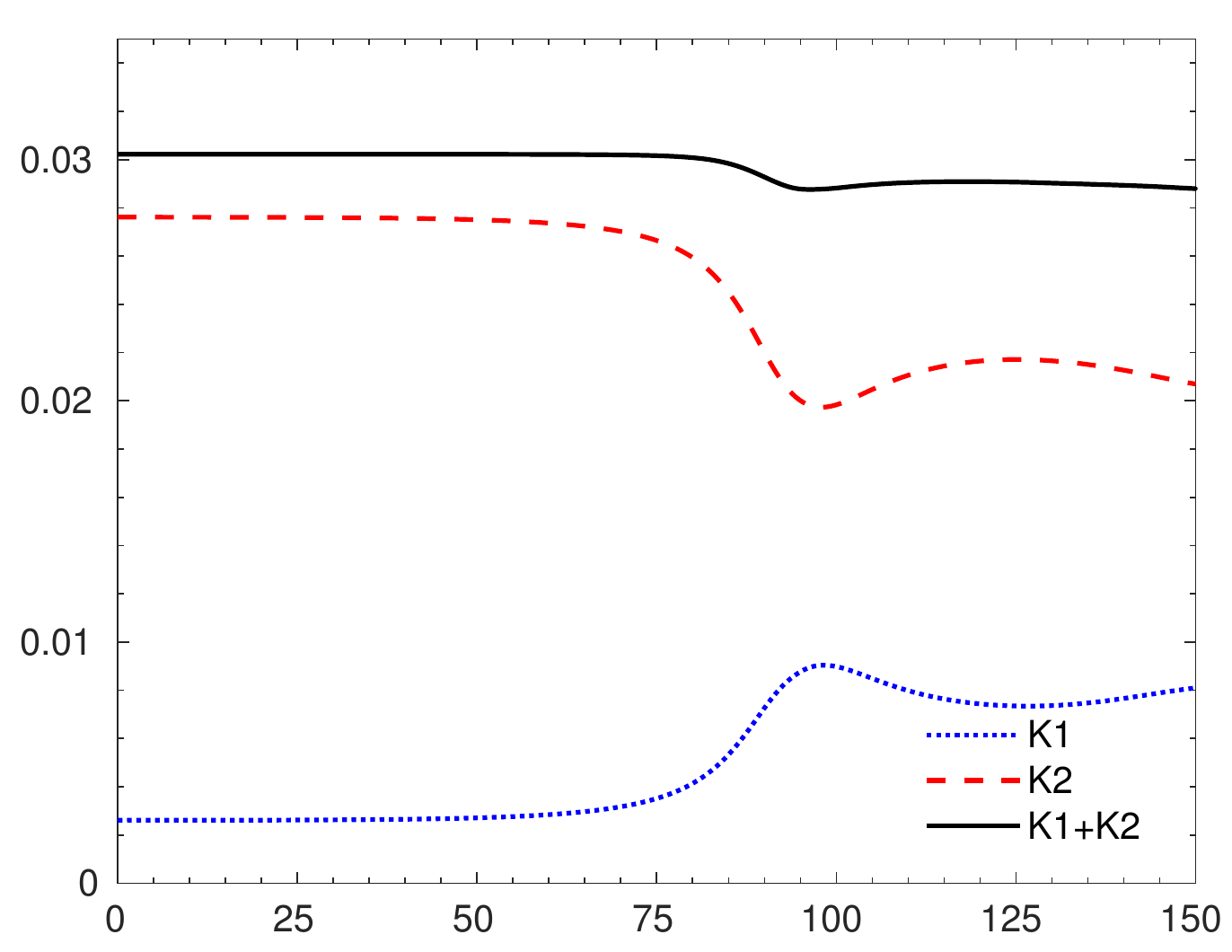}
}
\caption{Kinetic energy for case 1 (left) and case 2 (right).}
\label{fig2}
\end{figure}

\section{Conclusion} 
This paper concerns two approaches in the numerical investigation for the 
Vlasov-Maxwell system.  
The first study is devoted to the $hp$ streamline diffusion method for the 
relativistic Vlasov-Maxwell system in fully 3-dimensions. 
Our objective is to present unified phase-space (for Maxwell's equations) 
and phase-space-time (for the Vlasov-Maxwell system) discretization schemes  
that have optimal order convergence for the hyperbolic problems 
( ${\mathcal O}(h^{s-1/2})$ for solutions in the Sobolev space $H^s(\Omega)$ ) 
with strong stability properties and adaptivity features. 
The adaptivity in a priori regime is based on refining in the vicinity of 
singularities combined with lower order approximating polynomials and 
non-refined mesh with higher spectral order in smooth regions. 
In this way we have constructed a finite 
element mesh with several improving properties, e.g. 
stability, convergence, and adaptivity, gathered in it. To our knowledge, 
except in some work in convection-diffusion problems, see e.g. 
\cite{Suli_etal} and our study in \cite{AsadSopas}, such approach 
is not considered for this type of equations elsewhere. 

The second study concerns a penalty method for the Maxwell's equations, which 
is based on a certain Nitsche type symmetrization scheme. In this part we have
combined the field equations to a second order pde. For this equation 
 we derive a second order spatial approximation for the Nitsche's scheme. 
We also prove a second order temporal discretization, 
 assuming a somewhat more regular-in-time field functions. Even this approach 
is not considered in any other works for the VM system. 

The results are justified in lower dimensional cases through the accuracy
and the streaming Weibel instability tests presented in this paper
and through implementing some numerical examples in the forthcoming paper,
see \cite{Malmberg_Standar}. 
The full-dimensions are to expensive to experiment. 
However, the theoretical analysis and numerical justifications in 
low dimensions are indicating the robustness of the considered schemes.

\section*{Acknowledgment} 

%\noindent 
The research of the first author was supported by the 
Swedish Research Council VR.

\def\listing#1#2#3{{\sc #1}:\ {\it #2},\ #3.}

\end{document}